\newcommand{\calO}{{\mathcal{O}}}
\newcommand{\calM}{{\mathcal{M}}}
\newcommand{\calQ}{\mathcal{Q}}
\newcommand{\calE}{\mathcal{E}}
\newcommand{\calC}{\mathcal{C}}
\newcommand{\A}{\mathbf{A}}
\newcommand{\G}{\mathbf{G}}
\newcommand{\Z}{\mathbf{Z}}
\newcommand{\F}{\mathbf{F}}
\renewcommand{\P}{\mathbf{P}}
\newcommand{\Spec}{{\mathrm{Spec}}}
\newcommand{\Hom}{\mathrm{Hom}}
\newcommand{\Coh}{\mathrm{Coh}}
\newcommand{\Vect}{\mathrm{Vect}}
\newcommand{\D}{\mathrm{D}}
\newcommand{\R}{\mathrm{R}}
\renewcommand{\L}{\mathrm{L}}
\newcommand{\W}{\mathrm{W}}
\newcommand{\Pic}{\mathrm{Pic}}
\newcommand{\et}{\mathrm{\acute{e}t}}
\newcommand{\pr}{\mathrm{pr}}
\newcommand{\Bl}{\mathrm{Bl}}
\newcommand{\gr}{\mathrm{gr}}
\newcommand{\sh}{\mathrm{sh}}
\newcommand{\comment}[1]{}
\DeclareMathOperator{\colim}{colim}
\begin{document}

\bibliographystyle{alpha}

\newtheorem{theorem}{Theorem}[section]
\newtheorem*{theorem*}{Theorem}
\newtheorem*{condition*}{Condition}
\newtheorem*{definition*}{Definition}
\newtheorem{proposition}[theorem]{Proposition}
\newtheorem{lemma}[theorem]{Lemma}
\newtheorem{corollary}[theorem]{Corollary}
\newtheorem{claim}[theorem]{Claim}
\newtheorem{claimex}{Claim}[theorem]

\theoremstyle{definition}
\newtheorem{definition}[theorem]{Definition}
\newtheorem{question}[theorem]{Question}
\newtheorem{remark}[theorem]{Remark}
\newtheorem{example}[theorem]{Example}
\newtheorem{condition}[theorem]{Condition}
\newtheorem{warning}[theorem]{Warning}
\newtheorem{notation}[theorem]{Notation}

\title{$p$-divisibility for coherent cohomology}
\author{Bhargav Bhatt}
\address{Department of Mathematics \\ University of Michigan \\ Ann Arbor 48109 \\ USA}
\email{bhattb@umich.edu}
\begin{abstract}
We prove that the coherent cohomology of a proper morphism of noetherian schemes can be made arbitrarily $p$-divisible by passage to proper covers (for a fixed prime $p$). Under some extra conditions, we also show that $p$-torsion can be killed by passage to proper covers. These results are motivated by the desire to understand rational singularities in mixed characteristic, and have applications in $p$-adic Hodge theory.
\end{abstract}
\maketitle

\section{Introduction}
\label{sec:intro}

Fix a prime $p$. In this paper, we study the following question in mixed and positive characteristic geometry: given a scheme $X$ and a class $\alpha \in H^n(X,\calO_X)$ for some $n > 0$, does there exist a ``cover'' $\pi:Y \to X$ such that $\pi^* \alpha$ is divisible by $p$? Of course, as stated, the answer is trivially yes: take $Y$ to be a disjoint union of opens occurring in a \v{C}ech cocyle representing $\alpha$. However, the question becomes interesting if we impose geometric conditions on $\pi$, such as properness. The first obstruction encountered is the potential non-compactness of $X$: passage to proper covers cannot make cohomology classes $p$-divisible for the simplest of open varieties (such as $\A^2_{\F_p} - \{0\}$, see Example \ref{ex:properneeded}).  Our main result is that this is the only obstruction. In fact, we affirmatively answer the {\em relative} version of the above question for proper maps:

\begin{theorem}
\label{mixedcharpdiv}
Let $f:X \to S$ be a proper morphism of noetherian schemes with $S$ affine. Then there exists an alteration $\pi:Y \to X$ such $\pi^*(H^i(X,\calO_X)) \subset p(H^i(X,\calO_X))$ for $i > 0$.
\end{theorem}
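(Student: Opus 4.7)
The plan is to convert the $p$-divisibility condition into a mod-$p$ vanishing statement, reduce to the case where $X$ is strictly semistable over $S$ by de Jong, and then construct $\pi$ explicitly by extracting roots along the special fiber.

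Using the short exact sequence $0 \to \calO_Y \xrightarrow{p} \calO_Y \to \calO_Y/p \to 0$ on a prospective cover $Y$, one sees that $\pi^* H^i(X,\calO_X) \subset p H^i(Y,\calO_Y)$ if and only if the composite
\[
H^i(X,\calO_X) \xrightarrow{\pi^*} H^i(Y,\calO_Y) \to H^i(Y,\calO_Y/p) = H^i(Y_0,\calO_{Y_0})
\]
vanishes, where $Y_0 = Y \times_{\Spec\Z}\Spec(\F_p)$. Each $H^i(X,\calO_X)$ is a finitely generated module over $A := H^0(S,\calO_S)$, and alterations compose. So I may restrict to killing, for each $i > 0$, the image of a fixed finite set of classes in $H^i(X_0,\calO_{X_0})$. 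By spreading out, I may further assume $A$ is local and noetherian; if $p$ is a unit in $A$ the statement is vacuous, so I assume $p \in \fm_A$.

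The next step is to arrange geometry convenient for an explicit cover. I would apply de Jong's alteration theorem (possibly combined with a base alteration of $S$ absorbed into $\pi$) to replace $X$ by a strictly semistable model: \'etale-locally, $X$ looks like $\Spec(R[t_1,\dots,t_n]/(t_1\cdots t_r - \varpi))$ where $\varpi$ is a lift of a uniformizer and $R$ is \'etale over $A$. Then $X_0 \subset X$ is a reduced simple normal crossing divisor with components cut out by the $t_j$.

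On such a model, I construct $\pi: Y \to X$ by base-changing to $\Spec(A[\varpi^{1/p}])$ and then normalizing, producing a finite surjective cover that introduces $p$-th roots of the local components $t_j$ of the special fiber. The last step is the key calculation: using a \v{C}ech cover by semistable charts, one rewrites a \v{C}ech cocycle representative of a class in $H^i(X_0,\calO_{X_0})$ in terms of the $t_j$, and checks that after pulling back to $Y_0$ the cocycle admits a coboundary modulo $p$ (coming from the extracted roots). Equivalently, the trace-plus-Kummer-descent for the finite flat map $Y \to X$ lets one decompose $\pi_*\calO_Y$ and see that the augmentation ideal of the cover absorbs $H^i(X,\calO_X) \to H^i(Y_0,\calO_{Y_0})$.

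The main obstacle is precisely this last step: verifying that the Kummer-type ramification really does swallow all of $H^i(X,\calO_X)$ modulo $p$, not just classes of some restricted form. In practice I expect this to require an induction on $\dim X$ (using the Leray spectral sequence for $\pi$, combined with the fact that $R^j\pi_*\calO_Y$ is controlled by the components of the cover) or on $i$, with a separate argument handling the ``generic'' contribution (where $X$ is smooth and $p$ is a unit, hence trivial) and the ``boundary'' contribution along $X_0$ (where the $p$-th-root cover does its work). A secondary obstacle is ensuring that the whole argument is uniform enough to handle all $i > 0$ with a single alteration, which is arranged by composing the finitely many alterations produced in each degree.
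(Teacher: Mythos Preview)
Your reduction to mod-$p$ vanishing is fine, but the proposed construction of $\pi$ does not work, and the gap is not a technicality. Take the simplest test case: $S = \Spec(\Z_p)$ and $X$ a smooth proper curve over $\Z_p$ with good reduction, so the semistable chart has $r = 1$ and the single local equation is $t_1 = p$ (up to a unit). Your cover is then the normalisation of $X \times_{\Z_p} \Z_p[p^{1/p}]$; but $X$ is smooth over $\Z_p$, so this base change is already regular and normalisation changes nothing. The resulting $Y$ has $Y_0 = Y/p \simeq X_0 \times_{\F_p} \F_p[s]/(s^p)$, and the composite
\[
H^1(X,\calO_X) \longrightarrow H^1(X_0,\calO_{X_0}) \hookrightarrow H^1(X_0,\calO_{X_0}) \otimes_{\F_p} \F_p[s]/(s^p)
\]
is visibly nonzero whenever $H^1(X_0,\calO_{X_0}) \neq 0$, i.e., whenever the special fibre has positive genus. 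More generally, any cover ramified only along the special fibre leaves the reduced special fibre essentially unchanged (up to a purely inseparable or nilpotent thickening), so it cannot kill classes in $H^i(X_0,\calO_{X_0})$ that come from the \emph{geometry} of $X_0$. The Kummer/root-extraction idea is the right mechanism for making \emph{differential forms} $p$-divisible (as the paper notes in a remark), but it does nothing for $\calO$-cohomology.

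What is missing is a source of covers that are geometrically nontrivial on fibres. The paper supplies this via geometric class field theory: for a curve $C$, one pulls back the Abel--Jacobi embedding $C \hookrightarrow \Pic^0(C)$ along multiplication by $p$ on $\Pic^0(C)$, obtaining a cover $C' \to C$ for which $\Pic^0(C) \to \Pic^0(C')$ is divisible by $p$ \emph{by construction}, hence so is the induced map on $H^1(-,\calO)$ (the tangent space at the origin). The globalisation then uses de Jong to fibre $X \to S$ by semistable curves, spreads the generic-fibre cover to a family via stable-maps compactness, and handles the base by a separate induction on $\dim S$. The essential input you are lacking is this Picard-scheme trick; without it there is no mechanism in your outline that touches the horizontal cohomology of the fibres.
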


Theorem \ref{mixedcharpdiv} is non-trivial only when $p$ is not invertible on $S$. When $p = 0$ on $S$, Theorem \ref{mixedcharpdiv} says that alterations kill the higher relative cohomology of the structure sheaf for proper maps, which is one of the main theorems of \cite{ddscposchar}. However, the techniques used in {\em loc.\ cit.} depend heavily on the use of Frobenius, and thus do not transfer to the mixed characteristic world. Our proof of Theorem \ref{mixedcharpdiv} is geometric --- we crucially use ideas from geometric class field theory and de Jong's work on stable curve fibrations --- and can be used to give a new proof of \cite[Theorem 1.5]{ddscposchar}.

The results of \cite{ddscposchar} are quite useful in the study of singularities in positive characteristic, and we expect that Theorem \ref{mixedcharpdiv} will be similarly useful once the study of singularities in mixed characteristic acquires more maturity. Moreover, Theorem \ref{mixedcharpdiv}  (as well as a stronger statement that we can prove when $\dim(S) \leq 1$, see Remark \ref{rmk:ptorsionkill}) has found surprising applications recently in $p$-adic Hodge theory: Beilinson's recent new and simple approach to Fontaine's $p$-adic comparison conjectures  (see \cite{Beilinsonpadic,Beilinsoncrystalline} and also \cite{Bhattpadicddr}) uses Theorem \ref{mixedcharpdiv} as the key geometric ingredient in the proof of the so-called $p$-adic Poincare lemma. A generalisation of Theorem \ref{mixedcharpdiv} that we hope to prove (see Remark \ref{rmk:findfinitecov}) would help extend these $p$-adic comparison theorems to the {\em relative} setting, and also have purely algebraic applications (see Remark \ref{rmk:splinterdsplinter}).

\subsection*{Outline of the proof of Theorem \ref{mixedcharpdiv}}
Assume first that $f$ has relative dimension $1$. If $S$ was a point, then a natural strategy is: replace $X$ with its normalisation, identify the group $H^1(X,\calO_X)$ with the tangent space to the Picard variety $\Pic^0(X)$ at the origin, and construct maps of curves such that the pullback on Picard varieties is divisible by $p$, at least at the expense of extending the ground field (these maps can be constructed via an old trick from geometric class field theory). For a non-trivial family of curves, the preceding argument can be applied to solve the problem over the generic point. Using the existence of compact moduli spaces of stable curves (or, even better, stable {\em maps}),  one can then extend the generic solution to one over an alteration of $S$. This is not quite enough as the alteration is no longer affine, but it reduces the theorem for morphisms of relative dimension $\leq 1$ to the theorem for alterations.  In general, theorems of de Jong show that an arbitrary proper morphism $f$ of relative dimension $d$ can be altered into a sequence of $d$ iterated stable curve fibrations over an alteration of the base. The previous argument then lets us inductively reduce the general problem to that for alterations. For this last case, we carefully fibre $S$ itself by curves while preserving certain cohomological properties, and proceed by induction on $\dim(S)$ (with $\dim(S) = 1$ being trivial).

\subsection*{Organisation of this paper}
Theorem \ref{mixedcharpdiv} is proven in \S \ref{sec:mainthmmixedchar}: we discuss a reduction to relative dimension $0$ in \S \ref{subsec:reducetord0}, and then prove this case in \S \ref{subsec:mixedcharrd0}. Note that when $\dim(S) \leq 1$, the latter step is unnecessary. In \S \ref{sec:mixedimpliespos}, we explain how to deduce the apparently stronger sounding \cite[Theorem 1.5]{ddscposchar} from Theorem \ref{mixedcharpdiv}.

\subsection*{Acknowledgements}
This paper was a part of the author's dissertation supervised by Johan de Jong, and would have been impossible without his support and generosity: many ideas here were discovered in conversation with de Jong. In addition, the intellectual debt owed to \cite{dJAlt,dJAlt2} is obvious and great.

\section{The main theorem}
\label{sec:mainthmmixedchar}
In order to flesh out the outline from \S \ref{sec:intro}, we first make the following trivial observation:

\begin{lemma}
\label{lem:reducetoexc}
If Theorem \ref{mixedcharpdiv} is true for excellent schemes $S$, then it is true in general.
\end{lemma}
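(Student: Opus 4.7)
The plan is to use noetherian approximation to descend to an excellent base, then invoke the assumed case of the theorem and base change back. Write $S = \Spec A$ and present $A = \colim_\lambda A_\lambda$ as the filtered colimit of its finitely generated $\Z$-subalgebras; each $A_\lambda$ is noetherian and excellent. By standard spreading out (EGA IV, \S 8), I can find some $\lambda_0$ and a proper morphism $f_{\lambda_0}: X_{\lambda_0} \to S_{\lambda_0} := \Spec A_{\lambda_0}$ with $X \cong X_{\lambda_0} \times_{S_{\lambda_0}} S$. For $\lambda \geq \lambda_0$, set $X_\lambda := X_{\lambda_0} \times_{S_{\lambda_0}} S_\lambda$, so that $S = \lim S_\lambda$ and $X = \lim X_\lambda$ along affine transitions.

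The key compatibility is that coherent cohomology commutes with this colimit. Fixing a finite affine open cover of $X_{\lambda_0}$ and using its base changes to compute \v{C}ech cohomology yields $H^i(X, \calO_X) = \colim_\lambda H^i(X_\lambda, \calO_{X_\lambda})$. Combined with the coherence of the $R^i f_* \calO_X$, each $H^i(X, \calO_X)$ is a finitely generated $A$-module, so I can pick $\lambda_1 \geq \lambda_0$ large enough that a finite $A$-generating set for each cohomology group $H^i(X, \calO_X)$ (with $0 < i \leq \dim X$) lifts to classes in $H^i(X_{\lambda_1}, \calO_{X_{\lambda_1}})$.

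Now I invoke the assumed excellent case of Theorem \ref{mixedcharpdiv} on $f_{\lambda_1}: X_{\lambda_1} \to S_{\lambda_1}$ to obtain an alteration $\pi_{\lambda_1}: Y_{\lambda_1} \to X_{\lambda_1}$ with $\pi_{\lambda_1}^* H^i(X_{\lambda_1}, \calO) \subset p\, H^i(Y_{\lambda_1}, \calO)$. Base change produces $\pi: Y \to X$ with $Y := Y_{\lambda_1} \times_{X_{\lambda_1}} X$; this morphism is proper and surjective automatically. By naturality of pullback, each chosen generator $\alpha \in H^i(X, \calO_X)$, coming from some $\alpha_1 \in H^i(X_{\lambda_1}, \calO)$, satisfies $\pi^* \alpha = $ image of $\pi_{\lambda_1}^* \alpha_1$ in $H^i(Y, \calO_Y)$, hence lies in $p\, H^i(Y, \calO_Y)$. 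Since $\pi^*$ is $A$-linear and these classes generate $H^i(X, \calO_X)$ as an $A$-module, the full containment $\pi^* H^i(X, \calO_X) \subset p\, H^i(Y, \calO_Y)$ follows.

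The main technical point requiring care is verifying that $\pi$ is an alteration and not merely proper surjective: generic finiteness of $\pi_{\lambda_1}$ need not a priori survive a non-flat base change, since the generic points of $X$ may map to non-generic points of $X_{\lambda_1}$. I expect to handle this by enlarging $\lambda_1$ so that the topological structure of $X$ is already visible at level $\lambda_1$ (using that the underlying space $|X|$ is the cofiltered limit of the $|X_\lambda|$), ensuring the dense open of $X_{\lambda_1}$ over which $\pi_{\lambda_1}$ is finite pulls back to a dense open of $X$. Alternatively, one may post-compose $\pi$ with a de Jong alteration $Y'' \to Y$ arranged so that $Y'' \to X$ is generically finite; the cohomological containment is preserved by the transitivity of pullback, so this last maneuver costs nothing.
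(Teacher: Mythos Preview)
Your argument is correct and follows essentially the same approach as the paper: reduce to finitely many classes using coherence of $\R^i f_*\calO_X$, approximate by a finite-type situation over $\Z$ (hence excellent), apply the assumed case, and base change back. The paper phrases this as approximating a single quadruple $(X,S,f,\alpha)$ and then handles the alteration issue in one line by taking an irreducible component of $Y_{\lambda_1}\times_{X_{\lambda_1}} X$ dominating $X$; this is simpler than either of your proposed fixes and avoids any appeal to de Jong (which would be circular here, as de Jong's theorems require excellence).
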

\begin{proof}
Let $f:X \to S$ be a proper morphism of noetherian schemes. Then $\bigoplus_{i > 0} H^i(X,\calO_X)$ is a finite $\calO_S$-module, so it suffices to construct alterations of $X$ that make any fixed class $\alpha \in H^n(X,\calO_X)$ (for $n > 0$) divisible by $p$. For a fixed $\alpha$, the quadruple $(X,S,f,\alpha)$ can be approximated by a quadruple $(X_i,S_i,f_i,\alpha_i)$ with $X_i$ and $S_i$ excellent. By assumption, there is an alteration $\pi_i:Y_i \to X_i$ such that $\pi^*(\alpha_i) \in p (H^n(Y_i,\calO_{Y_i}))$. An irreducible component of $Y_i \times_{X_i} X$ dominating $X$ then does the job.
\end{proof}

Lemma \ref{lem:reducetoexc} allows us to restrict to excellent schemes in the sequel, which is convenient as we often want to normalise. We now make the following definition, integral to the rest of \S \ref{sec:mainthmmixedchar}:

\begin{definition}
Given a scheme $S$, we say that Condition $\calC_d(S)$ is satisfied if $S$ is excellent, and the following is satisfied by each irreducible component $S_i$ of $S$: given a proper surjective morphism $f:X \to S_i$ of relative dimenson $d$ with $X$ integral, there exists an alteration $\pi:Y \to X$ such that, with $g = f \circ \pi$, we have $\pi^*(\R^if_*\calO_X) \subset p (\R^i g_*\calO_Y)$ for $i > 0$.
\end{definition}

Theorem \ref{mixedcharpdiv} amounts to verifying Condition $\calC_d(S)$ for all excellent schemes $S$ (by Lemma \ref{lem:reducetoexc}). This verification is carried out in the sequel. More precisely, in \S \ref{subsec:reducetord0}, we will show that the validity of $\calC_0(S)$ for all excellent base schemes $S$ implies the validity of $\calC_d(S)$ for all integers $d$ and all excellent schemes $S$. We then proceed to verify Condition $\calC_0(S)$ in \S \ref{subsec:mixedcharrd0}.

\subsection{Reduction to the case of relative dimension $0$}
\label{subsec:reducetord0}

The objective of the present section is to show the relative dimension of maps considered in Theorem \ref{mixedcharpdiv} can be brought down to $0$ using suitable curve fibrations. The necessary technical help is provided by the following result, essentially borrowed from \cite{dJAlt2}, on extending maps between semistable curves.

\begin{proposition}
\label{extmapofsscurve}
Fix an integral excellent base scheme $B$ with generic point $\eta$. Assume we have semistable curves $\phi:C \to B$ and $\phi'_\eta:C'_\eta \to \eta$, and a $B$-morphism $\pi_\eta:C'_\eta \to C$. If $C'_\eta$ is geometrically irreducible, then we can alter $B$ to extend $\pi_\eta$ to a map of semistable cures over $B$, i.e., there exists an alteration $\tilde{B} \to B$ such that $C'_\eta \times_B \tilde{B}$ extends to a semistable curve over $\tilde{C'} \to \tilde{B}$ with $\tilde{C'}$ integral, and the map $\pi_\eta \times_B \tilde{B}$ extends to a $\tilde{B}$-map $\tilde{\pi}:\tilde{C'} \to C \times_B \tilde{B}$.
\begin{proof}
We may extend $C'_\eta$ to a proper $B$-scheme using the Nagata compactification theorem (see \cite[Theorem 4.1]{ConradNagata}). By taking the closure of the graph of the rational map defined from this compactification to $C$ by $\pi_\eta$, we obtain a proper dominant morphism $\phi':C' \to B$ of integral schemes whose generic fibre is the geometrically irreducible curve $\phi'_\eta:C'_\eta \to B$, and a $B$-map $\pi:C' \to C$ extending $\pi_\eta:C'_\eta \to C$. The idea, borrowed from \cite[\S 4.18]{dJAlt}, is the following: modify $B$ to make the strict transform of $C' \to B$ flat, alter the result to get enough sections which make the resulting datum generically a stable curve, use compactness of the moduli space of stable curves to extend the generically stable curve to a stable curve after further alteration, and then use stability and flatness to get a well-defined morphism from the resulting stable curve to the original one extending the existing one over the generic point. Instead of rewriting the details here, we refer the reader to \cite[Theorem 5.9]{dJAlt2} which directly applies to $\phi'$ to finish the proof (the integrality of $\tilde{C'}$ follows from the irreducibility of the generic fibre $C'_\eta \times_B \tilde{B}$).
\end{proof}
\end{proposition}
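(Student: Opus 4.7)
The plan is to first spread $\pi_\eta$ out to a morphism over all of $B$, and then alter $B$ so that the source becomes a semistable curve fitting into a commuting diagram with $C \to B$. I would begin by extending $C'_\eta$ to a proper $B$-scheme via Nagata compactification \cite[Theorem 4.1]{ConradNagata}, and then take the closure of the graph of the rational $B$-map from this compactification to $C$ induced by $\pi_\eta$. This yields a proper, dominant morphism $\phi': C' \to B$ of integral schemes whose generic fibre recovers $\phi'_\eta$, together with an honest $B$-morphism $\pi: C' \to C$ extending $\pi_\eta$.

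The main obstacle is now to arrange that, after alteration of $B$, the family $C' \to B$ is actually a semistable curve; in general its special fibres can be quite pathological. This is exactly the situation handled by de Jong's theory of stable curve fibrations. The remedy in \cite{dJAlt, dJAlt2} proceeds by the following sequence of alterations of $B$: blow up $B$ to make the strict transform of $C'$ flat over it; alter further to produce enough sections through the smooth locus of the flat family to rigidify the generic fibre as a pointed stable curve; invoke properness of the Deligne--Mumford moduli stack of pointed stable curves to extend the stable family from a dense open to all of the altered base, possibly after yet another alteration; and finally forget the marked points to recover a semistable curve over the altered base. This entire package is encapsulated in \cite[Theorem 5.9]{dJAlt2}, which I would apply directly to $\phi': C' \to B$.

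The output is an alteration $\tilde{B} \to B$ and a semistable curve $\tilde{C'} \to \tilde{B}$ extending $C'_\eta \times_B \tilde{B}$, together with the extension $\tilde{\pi}: \tilde{C'} \to C \times_B \tilde{B}$ of $\pi_\eta$ after base change: the extension exists by stability and flatness of the family against the properness of the target, with uniqueness forced by agreement on the (dense) generic fibre. Finally, the integrality of $\tilde{C'}$ is where the geometric irreducibility hypothesis on $C'_\eta$ enters: the generic fibre of $\tilde{C'} \to \tilde{B}$ is $C'_\eta \times_\eta \tilde{\eta}$, which remains irreducible by geometric irreducibility, and is reduced by semistability; together with irreducibility of $\tilde{B}$ this forces $\tilde{C'}$ to be integral.
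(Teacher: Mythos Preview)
Your proposal is correct and follows essentially the same route as the paper's own proof: Nagata compactification followed by graph closure to spread out $\pi_\eta$, then an appeal to de Jong's machinery (specifically \cite[Theorem 5.9]{dJAlt2}) to alter $B$ into a base over which the family becomes semistable, with integrality of $\tilde{C'}$ deduced from the geometric irreducibility of the generic fibre. Your write-up is slightly more explicit about the internal steps of de Jong's argument and about why the extended map $\tilde{\pi}$ exists, but the strategy and the key citations are identical.
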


\begin{remark}
Proposition \ref{extmapofsscurve}, while sufficient for the application we have in mind, is woefully inadequate in terms of the permissible generality. Similar ideas can, in fact, be used to show something much better: for any flat projective morphism $X \to B$, there exists an ind-proper algebraic stack $\overline{\calM}_g(X) \to B$ parametrising $B$-families of stable maps from genus $g$ curves to $X$, see \cite{AbramovichOort}.
\end{remark}

In addition to constructing maps of semistable curves, we will also need to construct maps that preserve sections. The following lemma says we can do so at a level of generality sufficient for our purposes.

\begin{lemma}
\label{extsectsscurve}
Fix an integral excellent base scheme $B$, two semistable curves $\phi_1:C_1 \to B$ and $\phi_2:C_2 \to B$,  and a surjective $B$-map $\pi:C_2 \to C_1$. Then any section of $\phi_1$ extends to a section of $\phi_2$ after an alteration of $B$, i.e., given a section $s:B \to C_1$, there exists an alteration $b:\tilde{B} \to B$ such that the induced map $\tilde{B} \to B \to C_1$ factors through a map $\tilde{B} \to C_2$.
\end{lemma}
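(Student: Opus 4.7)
The plan is to form the fibre product $Z := C_2 \times_{C_1} B$ using the section $s \colon B \to C_1$, and then extract the required alteration from a carefully chosen integral closed subscheme of $Z$.

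First I would check the relevant properties of the projection $\pi' \colon Z \to B$. Since $C_1$ and $C_2$ are both proper over $B$ (semistable curves are proper) and $\pi$ is a $B$-morphism, $\pi$ is proper. Hence $\pi'$, being a base change of $\pi$, is proper; similarly, since $\pi$ is surjective, so is $\pi'$. Note that $\pi'$ is not generically finite in general — its fibres are the (possibly positive-dimensional) fibres of $\pi$ over the image of $s$ — so $Z \to B$ is not yet an alteration.

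Next I would cut $Z$ down. Let $\eta$ denote the generic point of $B$. The generic fibre $Z_\eta = \pi^{-1}(s(\eta))$ is a non-empty scheme of finite type over the field $\kappa(\eta)$. Pick any closed point $z \in Z_\eta$; its residue field $\kappa(z)$ is a finite extension of $\kappa(\eta)$. Let $\tilde{B}$ be the closure of $\{z\}$ in $Z$, with the reduced scheme structure. Then $\tilde{B}$ is integral with generic point $z$, the induced map $\tilde{B} \to B$ is proper (as a closed subscheme of the proper $B$-scheme $Z$), it is surjective (its image in $B$ is closed and contains $\eta$, hence equals $B$ since $B$ is integral), and it is generically finite of degree $[\kappa(z):\kappa(\eta)]$. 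In particular, $b \colon \tilde{B} \to B$ is an alteration.

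Finally, the composition $\tilde{B} \hookrightarrow Z \to C_2$ is a $B$-map, and by construction of the fibre product the two composites $\tilde{B} \to C_2 \xrightarrow{\pi} C_1$ and $\tilde{B} \xrightarrow{b} B \xrightarrow{s} C_1$ agree, giving the desired factorisation of $s \circ b$ through $C_2$. I do not expect any serious obstacle here; the only non-formal input is that a non-empty scheme of finite type over a field has closed points (so the choice of $z$ is possible), and the rest is a routine application of properness and the universal property of fibre products.
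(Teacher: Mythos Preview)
Your proof is correct and follows essentially the same strategy as the paper's: lift the section at the generic point of $B$ and then take a closure to spread out over all of $B$. The paper does this by normalising $B$ in a finite extension of its function field and then taking the closure of the graph of the resulting rational map $B' \dashrightarrow C_2$, whereas your fibre-product formulation packages both steps into the single operation of closing up a closed point of $Z_\eta$ inside $Z = C_2 \times_{C_1} B$; this is slightly slicker but not materially different.
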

\begin{proof}
Let $\eta$ be the generic point of $B$, let $s:B \to C_1$ be the section of $\phi_1$ under consideration, and let ${s}_\eta:\eta \to C_1$ denote the restriction of $s$ to the generic point. By the surjectivity of $\pi$, the map $\pi_\eta:(C_2)_\eta \to (C_1)_\eta$ is surjective. Thus, there exists a finite surjective morphism $\eta' \to \eta$ such that the induced map $\eta' \to C_1$ factors through some map $s'_\eta:\eta' \to C_2$. If $B'$ denotes the normalisation of $B$ in $\eta' \to \eta$, then the map ${s}'_\eta$ spreads out to give a rational map $B' \dashrightarrow C_2$. Taking the closure of the graph of this rational map (over $B$) gives an alteration $b:\tilde{B} \to B$ such that the induced map $\tilde{B} \to C_1$ factors through a map $\tilde{s_2}:\tilde{B} \to C_2$, proving the claim.
\end{proof}

Proposition \ref{extmapofsscurve} lets us to construct maps of semistable curves by constructing them generically. We now construct the desired maps generically; the idea of this construction belongs to class field theory.

\begin{lemma}
\label{pdivcohcurvefield}
Let $X$ be a proper curve over a field $k$. Then there exists a field extension $k'$ of $k$, a proper smooth curve $Y$ over $k'$ with geometrically irreducible connected components, and a finite flat map $\pi:Y \to X_{k'}$ such that the induced map $\pi^*:\Pic(X_{k'}) \to \Pic(Y)$ of fppf sheaves of abelian groups on $\Spec(k)$ is divisible by $p$ in $\Hom(\Pic(X),\Pic(Y))$. 
\end{lemma}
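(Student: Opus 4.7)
The plan is to exploit the multiplication-by-$p$ isogeny on the Jacobian of $X$, the geometric class field theory trick alluded to in the introduction. First I would perform preliminary reductions: since pullback on $\Pic$ along any intermediate cover composes sensibly, I can replace $X$ by its normalisation, work on each component separately, and after extending $k$ to a sufficiently large (perfect) extension $k'$, assume $X$ is a smooth proper geometrically integral curve over $k'$ with a $k'$-rational point $x_0$. If $g := \mathrm{genus}(X) = 0$, so $X \cong \P^1_{k'}$, then the degree-$p$ map $\pi:\P^1 \to \P^1$, $[s:t] \mapsto [s^p:t^p]$, has $\pi^*$ equal to multiplication by $p$ on $\Pic(\P^1) = \Z$, completing the proof; thus assume $g \geq 1$.

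Let $J := \Pic^0(X)$ be the Jacobian and $\alpha: X \hookrightarrow J$ the Abel--Jacobi closed immersion $x \mapsto [\calO_X(x - x_0)]$. Form the fibre product of the multiplication-by-$p$ isogeny along $\alpha$,
\[
Y_0 \; := \; X \times_{J, [p]} J,
\]
with projections $\pi_1: Y_0 \to X$ and $\pi_2: Y_0 \to J$ satisfying $\alpha \circ \pi_1 = [p] \circ \pi_2$; the projection $\pi_1$ is finite flat of degree $p^{2g}$. A direct analysis of the generic fibre of $\pi_1$ (a $J[p]_{K(X)}$-torsor which turns out to be the $\Spec$ of a single purely inseparable field extension of $K(X)$ of degree $p^{2g}$) shows that $Y_0$ is generically reduced. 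Taking $Y$ to be the normalisation of $Y_0^{\red}$ then produces a smooth proper curve over $k'$ with finite flat $\pi: Y \to X_{k'}$ of generic degree $p^{2g}$; after further enlarging $k'$ the components of $Y$ can be arranged to be geometrically irreducible.

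The divisibility of $\pi^*$ then rests on two standard facts about the Jacobian: \emph{autoduality} yields an isomorphism $\alpha^*: \Pic^0(J) \xrightarrow{\sim} \Pic^0(X) = J$ of abelian varieties, and the theorem of the cube implies $[n]^* = [n]$ on $\Pic^0$ of any abelian variety. For $L \in \Pic^0(X)$, writing $L = \alpha^* M$ for the unique $M \in \Pic^0(J)$, I compute
\[
\pi^* L \; = \; \pi_1^* \alpha^* M \; = \; (\pi_2|_Y)^* \bigl([p]^* M\bigr) \; = \; \bigl((\pi_2|_Y)^* M\bigr)^{\otimes p},
\]
so $\pi^*|_{\Pic^0(X)} = [p] \circ \Phi_0$ where $\Phi_0 := (\pi_2|_Y)^* \circ (\alpha^*)^{-1}: J \to \Pic^0(Y)$. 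On the component group $\Pic/\Pic^0 = \Z$ (split via the basepoint $x_0$ and its preimages on $Y$), the map $\pi^*$ is multiplication by $\deg \pi = p^{2g}$, also divisible by $p$; combining these factorisations assembles into a sheaf morphism $\Phi: \Pic(X_{k'}) \to \Pic(Y)$ with $p\Phi = \pi^*$.

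The main obstacle I anticipate is the geometric claim that, even in positive characteristic where $[p]: J \to J$ is non-separable (as e.g.\ for supersingular Jacobians), the normalisation of $Y_0^{\red}$ is genuinely a smooth cover of $X$ of the full expected degree $p^{2g}$; once this is verified, the Picard calculation above is essentially formal.
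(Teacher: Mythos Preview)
Your strategy coincides with the paper's: reduce to a smooth geometrically connected pointed curve, embed via Abel--Jacobi into $J=\Pic^0(X)$, and take $Y$ to be the normalised pullback of $X$ along $[p]:J\to J$. The paper simply asserts that $\pi^*$ factors through $[p]$ on $\Pic(X)$; your autoduality and theorem-of-the-cube computation on $\Pic^0$ is a correct and welcome unpacking of that assertion.

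However, your analysis of the generic fibre of $\pi_1$ is wrong, and the obstacle you flag is real in a different way than you expect. In characteristic $p$ with $g\ge 2$, the scheme $Y_0$ is \emph{never} generically reduced: since $d[p]=0$ identically, the conormal map $I_{Y_0}/I_{Y_0}^2\to\Omega_J|_{Y_0}$ vanishes, so $\Omega_{Y_0/k}$ is locally free of rank $g$; at a generic point this forces the Artinian local ring to be non-reduced once $g>1$ (over a perfect field, a reduced curve has $1$-dimensional differentials). Concretely, over a perfect field the local equations of $Y_0$ are $p$-th powers, and one finds generic multiplicity $p^{g-1}$, so $\deg(Y\to X)=p^{g+1}$ rather than $p^{2g}$. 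The torsor description is fine, but a $J[p]$-torsor over $K(X)$ need not be a field, and here it is not. Fortunately this does not break the argument: your computation $\pi^*|_{\Pic^0}=p\cdot\Phi_0$ only uses the relation $\alpha\circ\pi=[p]\circ\beta$ and survives on each component $\pi_i:Y_i\to X$ regardless of degree. For the component group, apply ${\pi_i}_*$ to get $\deg(\pi_i)\cdot\id_J = p\cdot({\pi_i}_*\Phi_i)\in p\cdot\End(J)$; since $\id_J\notin p\cdot\End(J)$ (as $[p]$ has nontrivial kernel), this forces $p\mid\deg(\pi_i)$, and the two pieces assemble as you indicate.
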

\begin{proof}
The statement to be proven is stable under taking finite covers of $X$, and can be proven one connected component at a time. Thus, after picking a suitable finite extension $k'$ of $k$ and normalising $X_{k'}$, we may assume that $X$ is a smooth projective geometrically connected curve of genus $\geq 1$ with a rational point $x_0 \in X(k)$. The point $x$ defines the Abel-Jacobi map $X \to \Pic^0(X) \subset \Pic(X)$ via $x \mapsto \calO([x]) \otimes \calO(-[x_0])$. Riemann-Roch implies that this map is a closed immersion. We set $\pi:Y \to X$ to be the normalised inverse image of $X$ under the multiplication by $p$ map $[p]:\Pic(X) \to \Pic(X)$. It follows that the pullback $\pi^*:\Pic(X) \to \Pic(Y)$ factors through multiplication by $p$ on $\Pic(X)$ and is therefore divisible by $p$.
\end{proof}

\begin{remark}
Lemma \ref{pdivcohcurvefield} and the discussion below use basic properties of the relative Picard scheme of a proper flat family $f:X \to S$ of curves. A general reference for this object is \cite[\S 8-9]{NeronBook}. In this paper, we define $\Pic(X/S)$ as the fppf sheaf $\R^1 f_* \G_m$ on the category of all $S$-schemes. If $f$ has a section, then one can identify $\Pic(X/S)(T) \simeq \Pic(X \times_S T)/\Pic(T)$.  Since $f$ has relative dimension $1$, deformation theory implies that $\Pic(X/S)$ is smooth (as a functor). Two additional relevant properties are: (a) if $f$ has geometrically reduced fibres, then $\Pic(X/S)$ is representable by a smooth group scheme (by Artin's work), and (b) if $f$ is additionally semistable, then the connected component $\Pic^0(X/S)$ is semi-abelian.
\end{remark}

Lemma \ref{pdivcohcurvefield} allows us to construct covers of semistable curves that generically induce a map divisible by $p$ on cohomology. We now show how to globalise this construction; this forms one of the primary ingredients of our proof of Theorem \ref{mixedcharpdiv}.

\begin{proposition}
\label{pdivcohcurve}
Let $\phi:X \to T$ be a projective family of semistable curves with $T$ integral and excellent. Then there exists a diagram
\[ \xymatrix{ \tilde{X} \ar[r]^{\pi} \ar[d]^{\tilde{\phi}} & X \ar[d]^\phi \\ \tilde{T} \ar[r]^\psi & T } \]
satisfying the following:
\begin{enumerate}
\item The scheme $\tilde{T}$ is integral, and the map $\psi$ is an alteration.
\item $\tilde{\phi}$ is a projective family of semistable curves, and the map $\pi$ is proper and surjective.
\item The pullback map $\psi^*\R^1 \phi_*\calO_X \to \R^1\tilde{\phi}_*\calO_{\tilde{X}}$ is divisible by $p$ in $\Hom(\psi^*\R^1 \phi_*\calO_X, \R^1\tilde{\phi}_*\calO_{\tilde{X}})$.
\end{enumerate}
\end{proposition}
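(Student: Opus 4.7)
The strategy is to apply Lemma~\ref{pdivcohcurvefield} on the generic fibre, extend the resulting $p$-divisible cover to a global semistable family via Proposition~\ref{extmapofsscurve}, and then transfer the Picard-theoretic divisibility to a divisibility on cohomology via the Lie algebra identification $\R^1\phi_*\calO_X \simeq \mathrm{Lie}(\Pic^0(X/T))$.

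Since $\R^1\phi_*\calO_X$ vanishes on fibres of genus $0$, I assume the fibres of $\phi$ have positive genus. Let $\eta$ be the generic point of $T$. Lemma~\ref{pdivcohcurvefield} applied to $X_\eta$ supplies a finite extension $\eta'\to\eta$, a smooth proper curve $Y_{\eta'}$ over $\eta'$ with geometrically irreducible connected components $Y_{\eta'}^{(j)}$, and a finite flat $\pi_0\colon Y_{\eta'}\to X_{\eta'}$ whose Picard pullback is divisible by $p$. After first altering $T$ so that its generic point is $\eta'$, I apply Proposition~\ref{extmapofsscurve} component by component to obtain alterations $\tilde{T}^{(j)}\to T$ over which each $Y_{\eta'}^{(j)}$ extends to an integral semistable curve equipped with a $\tilde{T}^{(j)}$-map to $X\times_T \tilde{T}^{(j)}$. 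Taking an integral alteration $\psi\colon \tilde{T}\to T$ that dominates all the $\tilde{T}^{(j)}$ and assembling the pullbacks into a disjoint union produces a projective family of semistable curves $\tilde{\phi}\colon \tilde{X}\to \tilde{T}$ with a proper surjective $\tilde{T}$-map $\pi\colon \tilde{X}\to X\times_T \tilde{T}$ extending $\pi_0$ on generic fibres.

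To verify the divisibility, I translate to Picard schemes. By the remark following Lemma~\ref{pdivcohcurvefield}, the identity components $\Pic^0(X\times_T \tilde{T}/\tilde{T})$ and $\Pic^0(\tilde{X}/\tilde{T})$ are semi-abelian schemes over $\tilde{T}$, and the $\mathrm{Lie}$ functor identifies them with $\psi^*\R^1\phi_*\calO_X$ and $\R^1\tilde{\phi}_*\calO_{\tilde{X}}$ compatibly with $\pi^*$. By Lemma~\ref{pdivcohcurvefield}, the homomorphism of semi-abelian schemes $\pi^*\colon \Pic^0(X\times_T \tilde{T}/\tilde{T}) \to \Pic^0(\tilde{X}/\tilde{T})$ restricts generically to a map of the form $[p]\circ g_\eta$. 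The key step is to extend $g_\eta$ to a global homomorphism $g$ of semi-abelian schemes over $\tilde{T}$; once done, applying $\mathrm{Lie}$ to $\pi^*=[p]\circ g$ yields $d\pi^* = p\cdot dg$ in $\Hom(\psi^*\R^1\phi_*\calO_X,\R^1\tilde{\phi}_*\calO_{\tilde{X}})$, which is precisely the divisibility claim.

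The hard part will be this extension step: semi-abelian schemes are only quasi-projective, so the classical Néron-type extension of morphisms from the generic fibre (which is automatic for abelian schemes via properness) does not apply verbatim and requires a rigidity statement specific to semi-abelian schemes over a normal noetherian base. To bypass this delicate point entirely, one may instead build the factorisation directly into the geometry: after securing a section of $\phi$ by altering $T$ and invoking Lemma~\ref{extsectsscurve}, form the relative Abel-Jacobi embedding $X\hookrightarrow \Pic^0(X/T)$, take its normalised pullback under $[p]\colon \Pic^0(X/T)\to\Pic^0(X/T)$, and finally alter the base to obtain a semistable model using Proposition~\ref{extmapofsscurve}. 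In this variant the equality $\pi^*=[p]\circ g$ is manifest from the construction (with $g$ the pullback along the second-projection map $\tilde{X}\to \Pic^0(X_{\tilde{T}}/\tilde{T})$ combined with the autoduality of Jacobians), so the Lie-algebra computation from the previous paragraph finishes the proof with no extension issue.
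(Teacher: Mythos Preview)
Your main approach---construct the cover generically via Lemma~\ref{pdivcohcurvefield}, spread it out component by component via Proposition~\ref{extmapofsscurve}, and then translate divisibility of $\Pic^0(\pi)$ into divisibility on $\R^1\phi_*\calO$ via the Lie algebra identification---is exactly the paper's proof. You also correctly isolate the one nontrivial step: extending the generic factorisation $\pi^*_\eta = [p]\circ g_\eta$ to a global factorisation of homomorphisms of semi-abelian $\tilde{T}$-group schemes. The paper dispatches this by citing a rigidity theorem: over a normal noetherian base, restriction to the generic point is a fully faithful functor on semi-abelian schemes (\cite[Proposition~I.2.7]{FaltingsChai}). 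Since one may take $\tilde{T}$ normal, the generic divisibility immediately forces global divisibility, and the proof is complete. So your first paragraph, once supplemented with this citation, \emph{is} the paper's argument.

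Your proposed alternative, building the factorisation into the geometry via a global relative Abel--Jacobi construction, does not actually bypass the extension problem. After you alter the base to produce the semistable model $\tilde{X}$, Proposition~\ref{extmapofsscurve} only hands you the map $\tilde{X}\to X_{\tilde{T}}$; it says nothing about extending the second projection $\tilde{X}\dashrightarrow \Pic^0(X_{\tilde{T}}/\tilde{T})$ beyond the generic fibre, and this is again a rational map to a non-proper semi-abelian target---precisely the kind of extension you were trying to avoid. More seriously, your appeal to ``autoduality of Jacobians'' breaks down over the bad fibres: for a semistable (non-smooth) curve $C$, the group $\Pic^0(C)$ is only semi-abelian, and the identification $\Pic^0(\Pic^0(C))\simeq \Pic^0(C)$ valid for smooth projective curves fails in this generality. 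So the alternative route, as written, has a genuine gap; the clean fix is simply to invoke the Faltings--Chai rigidity result and finish along the lines of your first paragraph.
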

\begin{proof}
For any family $\phi:X \to T$ of projective semistable curves, there is a natural identification of $\R^1 \phi_*\calO_X$ with the normal bundle of the zero section of the semiabelian scheme $\Pic^0(X/T) \to T$. Moreover, given another semistable curve $\tilde{\phi}:\tilde{X} \to T$ and a morphism of semistable curves $\pi:\tilde{X} \to X$ over $T$, the induced map $\R^1(\pi): \R^1\phi_*\calO_X \to \R^1\tilde{\phi}_*\calO_{\tilde{X}}$ can be identified as the map on the corresponding normal bundles at $0$ induced by the natural morphism $\Pic^0(\pi):\Pic^0(X/T) \to \Pic^0(\tilde{X}/T)$. As multiplication by $n$ on smooth commutative $T$-group schemes induces multiplication by $n$ on the normal bundles at $0$, if $\Pic^0(\pi)$ is divisible by $p$, so is  $\R^1(\pi)$. As the formation of $\R^1\phi_*\calO_X$ commutes with arbitrary base change on $T$, it suffices to show:  there exists an alteration $\psi:\tilde{T} \to T$ and a morphism of semistable curves $\pi:\tilde{X} \to X \times_T \tilde{T}$ over $\tilde{T}$ such that the induced map $\Pic^0(\pi)$ is divisible by $p$.  Our strategy will be to construction a solution to this problem generically on $T$, and then use Proposition \ref{extmapofsscurve} and some elementary properties of semiabelian schemes to globalise.

Let $\eta$ denote the generic point of $T$. By Lemma \ref{pdivcohcurvefield}, we can find a finite extension $\eta' \to \eta$, and a proper smooth curve $Y_{\eta'} \to \eta'$ with geometrically irreducible components such that the induced map $\Pic^0(X_{\eta'}) \to \Pic^0(Y_{\eta'})$ is divisible by $p$. After replacing the map $X \to T$ with its base change along the normalisation of $T$ in $\eta' \to \eta$, we may assume that $\eta' = \eta$. The situation so far is summarised in the diagram
\[ \xymatrix{ \sqcup_i {Y_\eta}_i = Y_\eta  \ar[d] \ar[r] & X \ar[d] \\ \eta \ar[r] & T } \]
where the ${Y_\eta}_i$ are the (necessarily) geometrically irreducible components of $Y_\eta$. As each of the ${Y_\eta}_i$ is smooth as well, we may apply Proposition \ref{extmapofsscurve} to extend each ${Y_\eta}_i$ to a semistable curve $Y_i \to T_i$ where $T_i \to T$ is some alteration of $T$, such that the map ${Y_\eta}_i \to X$ extends to a map $Y_i \to X$. Setting $\tilde{T}$ to be a dominating irreducible component of the fibre product of all the $T_i$ over $T$, and setting $\tilde{X}$ to be the disjoint of $Y_i \times_{T_i} \tilde{T}$, we find the following: an alteration $\tilde{T} \to T$, a semistable curve $\tilde{X} \to \tilde{T}$ extending $Y_\eta \times_T \tilde{T}$, and a map $\tilde{\pi}:\tilde{X} \to X$ extending the existing one over the generic point. We will now check the required divisibility.

 As explained earlier, we must show that the resulting map $\Pic^0(X \times_T \tilde{T}/\tilde{T}) \to \Pic^0(\tilde{X}/\tilde{T})$ is divisible by $p$. This divisibility holds generically on $\tilde{T}$ by construction. Next,  note that the relative $\Pic^0$ of any semistable curve is a semiabelian group scheme. The normality of $\tilde{T}$ implies that restriction to the generic point is a fully faithful functor from the category of semiabelian schemes over $\tilde{T}$ to the analogous category over its generic point (see \cite[Proposition I.2.7]{FaltingsChai}). In particular, the generic divisibility by $p$ ensures the global divisibility by $p$, proving the existence of $\tilde{X}$ with the desired properties. 
\end{proof}

Recall that our immediate goal is to reduce Theorem \ref{mixedcharpdiv} to verifying Condition $\calC_0(S)$. Proposition \ref{pdivcohcurve} lets us make the relative cohomology of a curve fibration divisible by $p$ on passage to alterations, while de Jong's theorems let us alter an arbitrary proper dominant morphism into a tower of curve fibrations over an alteration of the base. These two ingredients combine to yield the promised reduction in relative dimension.

\begin{proposition}
\label{reducereldim}
Let $S$ be an excellent scheme such that Condition $\calC_0(S)$ is satisfied. Then $\calC_d(S)$ is satisfied for all $d \geq 0$.
\end{proposition}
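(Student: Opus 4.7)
My plan is to induct on $d$. The base case $d = 0$ is the hypothesis $\calC_0(S)$. For the inductive step I assume $\calC_{d-1}(S')$ for all excellent base schemes $S'$, and fix a proper surjective morphism $f \colon X \to S_i$ of relative dimension $d$ onto an irreducible component $S_i$ of $S$, with $X$ integral. The goal is to produce an alteration $\pi \colon Y \to X$ such that $\pi^*(\R^j f_* \calO_X) \subset p \cdot \R^j g_* \calO_Y$ for every $j > 0$, where $g = f \circ \pi$.

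First I would invoke de Jong's alteration theorems (\cite{dJAlt, dJAlt2}) to obtain an alteration $X_1 \to X$ with $X_1$ integral, fitting into a factorization
\[ X_1 \xrightarrow{\phi} X_0 \xrightarrow{f_0} S' \xrightarrow{\sigma} S_i \]
in which $\sigma$ is an alteration, $X_0$ is integral, $\phi$ is a projective semistable curve fibration, and $f_0$ has relative dimension $d-1$. Proposition~\ref{pdivcohcurve} applied to $\phi$ then produces an alteration $\psi \colon \tilde X_0 \to X_0$ (with $\tilde X_0$ integral), a semistable curve fibration $\tilde \phi \colon \tilde X_1 \to \tilde X_0$, and a proper surjection $\tilde X_1 \to X_1$ (automatically an alteration, by dimension count) for which $\psi^* \R^1 \phi_* \calO_{X_1} \to \R^1 \tilde \phi_* \calO_{\tilde X_1}$ is divisible by $p$ on $\tilde X_0$. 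Next I would apply the inductive hypothesis $\calC_{d-1}$ to $\tilde f_0 := f_0 \circ \psi \colon \tilde X_0 \to S'$, a morphism of relative dimension $d-1$, to obtain an alteration $\mu \colon W \to \tilde X_0$ with $W$ integral, satisfying $\mu^* \R^j \tilde f_{0*} \calO_{\tilde X_0} \subset p \cdot \R^j (\tilde f_0 \circ \mu)_* \calO_W$ on $S'$ for every $j > 0$. Finally, letting $Y$ be a dominating integral component of $\tilde X_1 \times_{\tilde X_0} W$, I obtain a semistable curve fibration $Y \to W$, an alteration $Y \to \tilde X_1$, and the candidate alteration $\pi \colon Y \to \tilde X_1 \to X_1 \to X$.

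The hardest part of the proof will be the divisibility check on $S_i$ itself. My plan is to compare the iterated Leray spectral sequences for the factorizations $\tilde X_1 \to \tilde X_0 \to S' \to S_i$ and $Y \to W \to S' \to S_i$. Proper flat base change for semistable curves identifies $\mu^* \R^q \tilde \phi_* \calO_{\tilde X_1}$ with $\R^q(Y \to W)_* \calO_Y$ for $q \in \{0, 1\}$. The $q = 1$ layer then inherits $p$-divisibility from Proposition~\ref{pdivcohcurve}, while the $(p,0)$-column with $p > 0$ inherits $p$-divisibility from the inductive hypothesis; the only contributions left unaddressed come from the column $\R^\bullet \sigma_* \calO_{S'}$. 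These I would absorb by a further base change along an alteration provided by $\calC_0(S)$ applied to $\sigma$. A diagram chase on the resulting converged spectral sequences then delivers the desired divisibility on $S_i$.
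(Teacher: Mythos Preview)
Your overall strategy---de Jong's fibration, Proposition~\ref{pdivcohcurve} for the curve direction, and induction for the lower-dimensional base---matches the paper's. But the phrase ``a diagram chase on the resulting converged spectral sequences then delivers the desired divisibility'' hides a genuine gap.

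The problem is the absence of sections. For a semistable curve $\phi\colon X_1 \to X_0$, the two-row Leray spectral sequence for $X_1 \to X_0 \to S_i$ gives (at best, once $d_2$ vanishes) short exact sequences
\[
0 \longrightarrow \R^j (X_0 \to S_i)_* \calO_{X_0} \longrightarrow \R^j(X_1 \to S_i)_* \calO_{X_1} \longrightarrow \R^{j-1}(X_0 \to S_i)_* \R^1\phi_* \calO_{X_1} \longrightarrow 0,
\]
and similarly for $Y \to W \to S_i$. Even granting that both outer terms acquire $p$-divisible image under pullback, the middle term need not: for a map of short exact sequences of abelian groups in which the two outer vertical arrows land in $p\cdot(-)$, the middle arrow can fail to (map $0 \to \Z \to \Z^2 \to \Z \to 0$ to itself by $a \mapsto pa$ on the sub, $c \mapsto pc$ on the quotient, and $(a,c) \mapsto (pa + c,\, pc)$ in the middle; then $(0,1)\mapsto(1,p)\notin p\Z^2$). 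The paper circumvents this by first altering the base of the curve fibration to acquire a \emph{section} $s$ of $\phi$, which both kills $d_2$ and splits the sequence, and then---crucially---uses Lemma~\ref{extsectsscurve} to lift $s$ through the alteration produced by Proposition~\ref{pdivcohcurve}, so that the resulting tower of short exact sequences is \emph{compatibly split}. With compatible splittings, $p$-divisibility of the two summands immediately gives $p$-divisibility of their sum. Your outline never introduces or propagates a section, and without one the spectral-sequence comparison does not close.

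Two smaller points. First, the inductive hypothesis ``$\calC_{d-1}(S')$ for all excellent $S'$'' overshoots what the proposition gives: $S$ is fixed, so by induction you only have $\calC_{d-1}(S)$. This is easily repaired---apply $\calC_{d-1}(S)$ directly to the composite $\tilde X_0 \to S' \to S_i$, which is proper surjective of relative dimension $d-1$ onto a component of $S$; the separate appeal to $\calC_0(S)$ for $\sigma$ then becomes unnecessary. Second, taking an integral component of $\tilde X_1 \times_{\tilde X_0} W$ can destroy the semistable-curve structure over $W$, since fibres of semistable curves need not be irreducible; it is safer, as the paper does, to work with the full fibre product throughout and pass to a dominating irreducible component only at the very end.
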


\begin{proof}
As Condition $\calC_d(S)$ is defined in terms of the irreducible components of $S$, we may assume that $S$ is integral itself. Fix integers $d,i > 0$, an integral scheme $X$, and a proper surjective morphism $f:X \to S$ of relative dimension $d$. By Corollary 5.10 of \cite{dJAlt2}, after replacing $X$ by an alteration, we may assume that $f:X \to S$ factors as follows:
\[ \xymatrix{ X \ar[rd]^f \ar[d]^\phi & \\
			 T \ar[r]^{f'} & S } \]
Here $\phi$ is a projective semistable curve, and $f'$ is a proper surjective morphism of integral excellent schemes of relative dimension $d-1$. Also, at the expense of altering $T$ further, we may assume that $\phi$ has a section $s:T \to X$. The fact that $\phi$ is a semistable curve gives us the formula $\calO_T \simeq \phi_*\calO_X$. Using the section $s$ and the Leray spectral sequence, we find an exact sequence
\[0 \to \R^if'_*\calO_T \to \R^if_*\calO_X \to \R^{i-1}f'_*\R^1\phi_*\calO_X \to 0\]
that is naturally split by the section $s$. Our strategy will be to prove divisibility for $\R^if_*\calO_X$ by working with the two edge pieces occuring in the exact sequence above. In more detail, we apply the inductive hypothesis to choose an alteration $\pi':T' \to T$ such that, with $g' = f'\circ\pi'$, we have $\pi'^*(\R^if'_*\calO_T) \subset p(\R^ig'_*\calO_{T'})$. The base change of $\phi$ and $s$ along $\pi'$ define for us a diagram
\[ \xymatrix{ X' = X \times_T T' \ar[r]^-{\pr_1} \ar[d]^{\phi'} & X \ar[rd]^f \ar[d]^\phi & \\
			 T' \ar[r]^{\pi'} \ar@/^1pc/[u]^{s'} & T \ar[r]^{f'} \ar@/^1pc/[u]^{s} & S } \]
The commutativity of the preceding diagram gives rise to a morphism of exact sequences
\[\xymatrix{ 0 \ar[r] & \R^if'_*\calO_T \ar[r] \ar[d]^{\pi'^*} & \R^if_*\calO_X \ar@/_1pc/[l]_{s^*} \ar[r] \ar[d]^{\pr_2^*} & \R^{i-1} f'_*\R^1\phi_*\calO_X \ar[d]^{\R^1\pr_2^*} \ar[r] & 0 \\
   0 \ar[r] & \R^ig'_*\calO_{T'} \ar[r] & \R^i (f \circ \pr_1)_*\calO_{X'} \ar[r] \ar@/_1pc/[l]_{s'^*} & \R^{i-1}g'_*\R^1\phi'_*\calO_{X'} \ar[r] & 0  } \]
compatible with the exhibited splittings. The map $\phi'$ is a semistable curve with a section $s'$. Applying Proposition \ref{pdivcohcurve} and using Lemma \ref{extsectsscurve}, we can find a commutative diagram
\[ \xymatrix{ X'' \ar[r]^-a \ar[d]^{\phi''} &  X' = X \times_T T' \ar[r]^-{\pr_1} \ar[d]^{\phi'} & X \ar[rd]^f \ar[d]^\phi & \\
			  T'' \ar[r]^{\pi''} \ar@/^1pc/[u]^{s''} &  T' \ar[r]^{\pi'} \ar@/^1pc/[u]_{s'} & T \ar[r]^{f'} \ar@/^1pc/[u]^{s} & S } \]
where $\pi''$ is an alteration, $\phi''$ is a semistable curve, $a$ is an alteration, $s''$ is a section of $\phi''$ (compatible with $s'$ and $s$ thanks to the commutativity of the picture), such that $a^*\R^1\phi'_*\calO_{X'} \to \R^1\phi''_*\calO_{X''}$ is divisible by $p$. Setting $g'' = g' \circ \pi''$ gives a diagram of exact sequences
\[\xymatrix{ 0 \ar[r] & \R^if'_*\calO_T \ar[r] \ar[d]^{\pi'^*} & \R^if_*\calO_X  \ar@/_1pc/[l]_{s^*} \ar[r] \ar[d]^{\pr_2^*} & \R^{i-1} f'_*\R^1\phi_*\calO_X \ar[d]^{\R^1\pr_2^*} \ar[r] & 0 \\
   0 \ar[r] & \R^ig'_*\calO_{T'} \ar[r] \ar[d]^{\pi''^*} & \R^i (f \circ \pr_1)_*\calO_{X'} \ar@/_1pc/[l]_{s'^*} \ar[r] \ar[d]^{a^*} & \R^{i-1}g'_*\R^1\phi'_*\calO_{X'} \ar[r] \ar[d]^{\R^1a^*} & 0  \\
   0 \ar[r] & \R^ig''_*\calO_{T''} \ar[r] & \R^i (f \circ \pr_1 \circ a)_*\calO_{X''} \ar[r] \ar@/_1pc/[l]_{s''^*}  & \R^{i-1}g''_*\R^1\phi''_*\calO_{X''} \ar[r] & 0 } \]
which is compatible with the exhibited splittings of each sequence. As $\R^1a^*$ is divisible by $p$, the image of right vertical composition is divisible by $p$. The image of the left vertical composition is divisible by $p$ by construction of $\pi'$. By compatibility of the morphism of exact sequences with the exhibitted splittings, the image of the middle vertical composition is also divisible by $p$. Replacing $X''$ be an irreducible component dominating $X$ then proves the claim.
\end{proof}

\begin{remark}
	\label{rmk:ptorsionkill}
	Consider the special case of Theorem \ref{mixedcharpdiv} when the base $S$ has dimension $\leq 1$; for example, $S$ could be the spectrum of an excellent discrete valuation ring. Any alteration of such an $S$ can be dominated by a finite cover of $S$, so $\calC_0(S)$ is trivially satisfied. Proposition \ref{reducereldim} then already implies that Theorem \ref{mixedcharpdiv} is true for such $S$. In fact, tracing through the proof (and using the strong $p$-divisibility in Proposition \ref{pdivcohcurve} (3)), one observes that a stronger statement has been shown: for any proper morphism $f:X \to S$, there is a proper surjective morphism $\pi:Y \to X$ such that, with $g = f \circ \pi$, the pullback $\pi^*: \tau_{\geq 1} \R f_* \calO_X \to \tau_{\geq 1} \R g_* \calO_Y$ induces the $0$ map on $- \otimes^\L_{\Z} \Z/p$. Concretely, this means: in addition to making higher cohomology classes $p$-divisible on passage to alterations, one can also kill $p$-torsion classes by alterations. It is this stronger statement that has applications in $p$-adic Hodge theory. We hope in the future to extend this stronger conclusion to higher dimensional base schemes $S$. 
\end{remark}

\begin{remark}
\label{rmk:splinterdsplinter}
The stronger statement discussed at the end of Remark \ref{rmk:ptorsionkill} has purely algebraic consequences: it lifts \cite[Theorem 1.4]{ddscposchar} to $p$-adically complete noetherian schemes (after a small extra argument). In particular, it implies that splinters over $\Z_p$ have rational singularities after inverting $p$. Such a statement is interesting from the perspective of the direct summand conjecture (see \cite{HochsterSurvey}) as there are {\em no} known non-trivial restrictions on a splinters in mixed characteristic (to the best of our knowledge).
\end{remark}

\subsection{The case of relative dimension $0$}
\label{subsec:mixedcharrd0}

In this section we will verify Condition $\calC_0(S)$ for all excellent schemes $S$. After unwrapping definitions and some easy reductions, one reduces to showing: given an alteration $f:X \to S$ with $S$ affine and a class $\alpha \in H^i(X,\calO)$ with $i > 0$, there exists an alteration $\pi:Y \to X$ such that $p \mid \pi^*(\alpha)$. If $\alpha$ arose as the pullback of a class under a morphism $X \to \overline{X}$ with $\overline{X}$ proper over an affine base of dimension $\dim(S) - 1$, then we may conclude by induction using Proposition \ref{reducereldim}. The proof below will show that, at the expense of certain technical but manageable modifications, this method can be pushed through; the basic geometric ingredient is Lemma \ref{geompresentlemma}. The main result is:

\begin{proposition}
\label{killbyalt}
The Condition $\calC_0(S)$ is satisfied by all excellent schemes $S$.
\end{proposition}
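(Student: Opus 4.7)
The plan is to induct on $\dim(S)$. Working one irreducible component of $S$ at a time, we may assume $S$ is integral. The base case $\dim(S) \leq 1$ is essentially trivial: any alteration of $S$ is dominated by a finite surjective map (take the normalisation of $S$ in the function field of the alteration), so given a proper surjective $f:X \to S$ of relative dimension $0$, we may dominate by a finite map $Y \to S$ factoring through $X$. After restricting to an affine open of $S$, finiteness forces $H^i(Y,\calO_Y) = 0$ for $i > 0$, so $\calC_0(S)$ holds vacuously.

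For the inductive step, assume $\calC_0(S')$ for every excellent $S'$ with $\dim(S') < n$; by Proposition \ref{reducereldim} applied to each such $S'$, this automatically upgrades to $\calC_d(S')$ for all $d \geq 0$. Let $S$ be excellent and integral with $\dim(S) = n$, and let $f:X \to S$ be a proper surjective morphism of relative dimension $0$ with $X$ integral. Using the finiteness of $\bigoplus_{i > 0} \R^i f_* \calO_X$ as an $\calO_S$-module and standard reductions (as in Lemma \ref{lem:reducetoexc}), it suffices to work with $S$ affine and to kill a single class $\alpha \in H^i(X,\calO_X)$ with $i > 0$ by a suitable alteration $\pi:Y \to X$.

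The geometric heart of the argument is Lemma \ref{geompresentlemma}, which should provide, after replacing $X$ by an alteration, a factorisation $h:X \to \overline{X}$ together with a proper map $q:\overline{X} \to B$, where $B$ is an excellent affine scheme of dimension $n-1$, such that the prescribed class $\alpha$ is the pullback under $h$ of some $\overline{\alpha} \in H^i(\overline{X}, \calO_{\overline{X}})$. Geometrically, one fibres $S$ by curves over a base $B$ of dimension $n-1$ and compactifies $X$ along these fibres carefully enough that $\alpha$ ascends to the compactification. This step is the main obstacle: not every class is a priori pulled back from a compactification, so one must rearrange both $X$ and the chosen fibration to make this so while preserving control over the specific class $\alpha$.

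Granted such a presentation, the inductive hypothesis gives $\calC_d(B)$ for the relative dimension $d$ of $q:\overline{X} \to B$, yielding an alteration $\overline{\pi}:\overline{Y} \to \overline{X}$ with $\overline{\pi}^*(\R^i q_* \calO_{\overline{X}}) \subset p \cdot \R^i (q \circ \overline{\pi})_* \calO_{\overline{Y}}$. Since $B$ is affine, vanishing of higher coherent cohomology converts this into $\overline{\pi}^*(\overline{\alpha}) \in p \cdot H^i(\overline{Y}, \calO_{\overline{Y}})$. Pulling $\overline{\pi}$ back along $h:X \to \overline{X}$ and taking a dominant irreducible component of $X \times_{\overline{X}} \overline{Y}$ produces the desired alteration $\pi:Y \to X$ with $p \mid \pi^*(\alpha)$, completing the induction.
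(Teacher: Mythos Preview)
Your inductive architecture is the same as the paper's, and the base case and the use of Proposition~\ref{reducereldim} to upgrade $\calC_0$ to $\calC_d$ on lower-dimensional bases are correct. However, the heart of the argument---how a class $\alpha \in H^i(X,\calO_X)$ is made to ``ascend'' to a compactification proper over a lower-dimensional base---is left as a black box, and the specific mechanism the paper uses is not the one you suggest.

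First, Lemma~\ref{geompresentlemma} is stated only for local schemes essentially of finite type over $\W(\overline{\F}_p)$; it does not apply to an arbitrary excellent affine $S$. The paper spends Lemmas~\ref{reducetolocal}--\ref{reducetomixedchar} reducing to that setting (Zariski and \'etale localisation, finite covers, approximation to finite type over $\Z$, and Popescu's theorem to pass from $\Z_{(p)}^{\sh}$ to its completion). Without these reductions you cannot invoke the presentation lemma.

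Second, and more substantively, Lemma~\ref{geompresentlemma} does not directly hand you a map $h:X \to \overline{X}$ with $\alpha = h^*\overline{\alpha}$. It fibres $S$, not $X$: it takes as input the \emph{center} $\widehat{Z} \subset \widehat{S}$ of the alteration $f$ (a closed subset of codimension $\geq 2$, since $f$ has relative dimension $0$) and produces a partial compactification $S \hookrightarrow \overline{S}$ proper over a lower-dimensional affine $W$, with boundary $\partial\overline{S}$ an affine Cartier divisor, such that $Z$ remains closed in $\overline{S}$ and disjoint from $\partial\overline{S}$. One then extends $f$ to an alteration $\overline{f}:\overline{X} \to \overline{S}$ that is \emph{finite} over $\overline{S} \setminus Z$; in particular finite over $\partial\overline{S}$, so the boundary $\Delta \subset \overline{X}$ is itself affine. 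Proposition~\ref{cohvanprop} then gives the surjection $H^i(\overline{X},\calO) \twoheadrightarrow H^i(X,\calO)$ for $i>0$, which is exactly the ``ascent'' you need. You correctly flagged this step as the main obstacle, but the resolution---tracking the center, keeping it closed and away from the boundary, and applying Proposition~\ref{cohvanprop}---is the content you are missing.
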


Our proof of Proposition \ref{killbyalt} will consist of a series of reductions which massage $S$ until it becomes a geometrically accessible object (see Lemma \ref{reducetomixedchar} for the final outcome of these ``easy'' reductions).

\begin{warning}
\label{abusewarnmixedchar}
For conceptual clarity, we often commit the following abuse of mathematics in the sequel: when proving a statement of the form that $\calC_d(S)$ is satisfied for all integers $d$ and a particular scheme $S$, we ignore the restrictions on integrality and relative dimension imposed by Condition $\calC_d(S)$ while making certain constructions; the reader can check that in each case the statement to be proven follows from our constructions by taking suitable irreducible components (see Lemma \ref{reducetolocal} for an example). We strongly believe that this abuse, while easily fixable, enhances readability.
\end{warning}

We first observe that the problem is Zariski local on $S$.

\begin{lemma}
\label{reducetolocal}
The Condition $\calC_d(S)$ is local on $S$ for the Zariski topology, i.e., if $\{U_i \hookrightarrow X\}$ is a Zariski open cover of $X$, then $\calC_d(S)$ is satisfied if and only if $\calC_d(U_i)$ is satisfied  for all $i$.
\end{lemma}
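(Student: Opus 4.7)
The plan is to establish both implications, reading the displayed cover as a Zariski cover $\{U_i\}$ of $S$ (the ``$X$'' in the statement appears to be a typo for $S$, the base scheme appearing in $\calC_d(S)$).

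\textbf{Necessity.} Assume $\calC_d(S)$ holds and let $U \subseteq S$ be one of the $U_i$'s. Given a proper surjective morphism $h:Z \to V$ of relative dimension $d$ with $V$ an irreducible component of $U$ and $Z$ integral, write $V = U \cap S_0$ for the unique irreducible component $S_0$ of $S$ meeting $V$. The plan is to use Nagata compactification (cf.\ \cite{ConradNagata}) to extend $Z$ to a proper $\bar Z \to S_0$; passing to an irreducible component containing $Z$, one may take $\bar Z$ integral. Invoking Warning \ref{abusewarnmixedchar} (which waives the strict relative-dimension requirement in intermediate constructions), apply $\calC_d(S)$ to $\bar Z \to S_0$ to produce an alteration $\bar\pi:\bar Y \to \bar Z$ realizing the desired divisibility on higher direct images. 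Restricting to $V$ and extracting an irreducible component of $\bar Y \times_{\bar Z} Z$ dominating $Z$ then yields the alteration required by $\calC_d(U)$, since formation of $\R^j(\cdot)_*\calO_{(\cdot)}$ commutes with the flat base change given by an open restriction.

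\textbf{Sufficiency.} Assume $\calC_d(U_i)$ holds for each $i$; since $S$ is noetherian, one may assume the cover $\{U_1,\ldots,U_n\}$ is finite. Per Warning \ref{abusewarnmixedchar}, fix a proper surjective morphism $f:X \to S$ of relative dimension $d$ with $X$ and $S$ integral. The plan is to inductively construct alterations $\pi_k:Y_k \to X$ (with $Y_k$ integral) such that, writing $g_k := f \circ \pi_k$, the image of the pullback $\R^j f_*\calO_X \to \R^j g_{k,*}\calO_{Y_k}$ lies in $p\cdot\R^j g_{k,*}\calO_{Y_k}$ over $V_k := U_1 \cup \cdots \cup U_k$ for every $j > 0$.

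The key observation enabling the induction is monotonicity: once $p$-divisibility has been achieved at stage $k-1$ over $V_{k-1}$, it persists for any further alteration $Y_k \to Y_{k-1}$, because the pullback $\R^j f_*\calO_X \to \R^j g_{k,*}\calO_{Y_k}$ factors through $\R^j g_{k-1,*}\calO_{Y_{k-1}}$. For the inductive step, apply $\calC_d(U_k)$ to (an irreducible component of) the restriction $Y_{k-1} \times_S U_k \to U_k$ to obtain a local alteration $W \to Y_{k-1}|_{U_k}$ realizing $p$-divisibility over $U_k$. Extend $W$ to a global alteration $Y_k \to Y_{k-1}$ via Nagata compactification of $W \to Y_{k-1}$, replacing the compactification by the closure of $W$ to keep $Y_k$ integral and dominant over $Y_{k-1}$. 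Since this closure is irreducible and $W$ is open in $Y_k$ and proper (hence closed) in $Y_k|_{U_k}$, one obtains $Y_k|_{U_k} = W$, so the local divisibility carries over. Combined with monotonicity over $V_{k-1}$, this gives divisibility over all of $V_k$. After $n$ steps, $Y_n \to X$ is the required global alteration.

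\textbf{Main obstacle.} The delicate bookkeeping step is the globalisation of the local alteration $W$ over $U_k$ to an alteration of $Y_{k-1}$ in a way that both preserves integrality and satisfies the compatibility $Y_k|_{U_k} = W$; this is handled by Nagata compactification followed by passage to the closure of $W$. The clean monotonicity of the divisibility property under further alteration is what permits the induction to proceed without backtracking on earlier choices.
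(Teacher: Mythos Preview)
Your argument is correct. The necessity direction is essentially identical to the paper's: extend via Nagata, apply $\calC_d(S)$, and restrict using flat base change.

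For sufficiency the paper proceeds differently. Rather than your inductive construction (handle one $U_k$ at a time, compactify the local alteration back over $Y_{k-1}$, and use monotonicity of $p$-divisibility under further alteration), the paper applies $\calC_d(U_i)$ to all the restrictions $f_i:X_{U_i}\to U_i$ at once to obtain local alterations $\pi_i:Y_i\to X_{U_i}$, and then invokes a single black-box spreading-out result (\cite[Proposition~4.1]{ddscposchar}) to manufacture one global $\pi:Y\to X$ whose restriction over each $U_i$ factors through $\pi_i$; local containment in $p\cdot(-)$ then gives global containment. Your route is more self-contained (no external citation needed) and makes explicit the mechanism---Nagata plus closure to force $Y_k|_{U_k}=W$, and monotonicity to protect earlier gains---whereas the paper's route is terser but hides exactly that mechanism inside the cited proposition. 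Functionally the two are equivalent: the spreading-out lemma in \cite{ddscposchar} is proved by an argument of the same shape as yours.
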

\begin{proof}
We will first show that $\calC_d(S)$ implies $\calC_d(U)$ for any open $j:U \to S$. By Nagata compactification (see \cite[Theorem 4.1]{ConradNagata}), given any alteration $f:X \to U$, we can find an alteration $\overline{f}:\overline{X} \to S$ extending $f$ over $U$. As $j:U \to S$ is flat, we have that $j^*\R^i\overline{f}_*\calO_{\overline{X}} = \R^i f_*\calO_X$. By assumption, we can find an alteration $\overline{\pi}:\overline{Y} \to \overline{X}$ such that, with $\overline{g} = \overline{f} \circ \overline{\pi}$, we have $\overline{\pi}^*\R^i\overline{f}_*\calO_{\overline{X}} \subset p(\R^i\overline{g}_*\calO_{\overline{Y}})$. Restricting to $U$ and using flat base change for $\overline{g}$ produces the desired result. 

Conversely, assume there exists a cover $\{U_i \hookrightarrow X\}$ such that $\calC_d(U_i)$ is true. Given an alteration $f:X \to S$, define $f_i:X_{U_i} \to U_i$ to be the natural map. The assumption implies that we can find alterations $\pi_i:Y_i \to X_{U_i}$ such that, with  $g_i = f_i \circ \pi_i$, we have $\pi_i^*(\R^j {f_i}_* \calO_{X_{U_i}}) \subset p (\R^j {g_i}_*\calO_{Y_i})$ for each $i$. By an elementary spreading out argument (see \cite[Proposition 4.1]{ddscposchar}),  we can find $\pi:Y \to X$ such that $\pi \times_S U_i$ factors through $\pi_i$. As taking higher pushforwards commutes with restricting to open subsets,  we see that $\pi^*(\R^j f_*\calO_X) \subset \R^j g_*\calO_Y$ is a subsheaf that is locally inside $p(\R^j g_*\calO_Y)$. As containments between two subsheaves of a given sheaf can be detected locally, the claim follows. 
\end{proof}

Next, we show how to localise for the topology of finite covers.

\begin{lemma}
\label{reducetofcov}
The Condition $\calC_d(S)$ is local on $S$ for the topology of finite covers, i.e., if $g:S' \to S$ is finite surjective, then $\calC_d(S)$ is satisfied if and only if $\calC_d(S')$ is satisfied.
\end{lemma}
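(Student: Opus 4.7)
The plan is to prove the two implications separately, relying throughout on two standard facts about the finite morphism $g\colon S' \to S$: (i) since $g$ is affine, $\R^j g_* = 0$ on quasicoherent sheaves for $j > 0$, so for any $f'\colon X \to S'$ with composition $f = g\circ f'$, the Leray spectral sequence degenerates to $\R^i f_*\calO_X = g_*\R^i f'_*\calO_X$; and (ii) $g_*$ identifies quasicoherent sheaves on $S'$ with quasicoherent $g_*\calO_{S'}$-modules on $S$, so $g_*$ is exact and preserves and reflects both containment of subsheaves and multiplication by $p$. In particular, a map $\phi\colon\calF \to \calG$ of quasicoherent $\calO_{S'}$-modules satisfies $\phi(\calF) \subset p\calG$ if and only if $g_*\phi$ has image in $g_*(p\calG) = p \cdot g_*\calG$.

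For the implication $\calC_d(S) \Rightarrow \calC_d(S')$: given an integral $X$ and a proper surjective $f'\colon X \to S'$ of relative dimension $d$, the composition $f = g\circ f'\colon X \to S$ is still proper, surjective, and of relative dimension $d$ (since $g$ is finite, of relative dimension $0$). Applying $\calC_d(S)$ subject to Warning~\ref{abusewarnmixedchar} produces an alteration $\pi\colon Y \to X$ with $\pi^*(\R^i f_*\calO_X) \subset p\R^i(f\pi)_*\calO_Y$. By (i), each side is the $g_*$-pushforward of the analogous sheaf on $S'$, and the displayed map is $g_*$ applied to the pullback $\pi^*\colon \R^i f'_*\calO_X \to \R^i(f'\pi)_*\calO_Y$; invoking (ii) then yields $\pi^*(\R^i f'_*\calO_X) \subset p\R^i(f'\pi)_*\calO_Y$.

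For the implication $\calC_d(S') \Rightarrow \calC_d(S)$: starting from an integral $X$ and a proper surjective $f\colon X \to S$ of relative dimension $d$, form the base change
\[ \xymatrix{ X' \ar[r]^-{h} \ar[d]_{f'} & X \ar[d]^{f} \\ S' \ar[r]^-{g} & S, } \]
so $h$ is finite surjective and $f'$ is proper surjective of relative dimension $d$. Applying $\calC_d(S')$ (with Warning~\ref{abusewarnmixedchar} handling the integrality of $X'$) to an integral component of $X'$ dominating $S'$ yields an alteration $\pi'\colon Y' \to X'$ with $\pi'^*(\R^i f'_*\calO_{X'}) \subset p\R^i(f'\pi')_*\calO_{Y'}$; set $\pi = h\circ\pi'$ and replace $Y'$ by a dominating irreducible component $Y$ so that $\pi\colon Y \to X$ is an alteration. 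Using $fh = gf'$ and observation (i), the pullback factors as
\[ \R^i f_*\calO_X \xrightarrow{\,h^*\,} \R^i(fh)_*\calO_{X'} = g_*\R^i f'_*\calO_{X'} \xrightarrow{\,g_*(\pi'^*)\,} g_*\R^i(f'\pi')_*\calO_{Y'} = \R^i(f\pi)_*\calO_{Y'}, \]
and the hypothesis on $\pi'^*$ combined with (ii) places the image of the second arrow—hence of the whole composition—inside $p\R^i(f\pi)_*\calO_{Y'}$; restriction to $Y$ preserves this. The main obstacle is purely organisational: threading the base change through the Leray identifications and checking that the final restriction to a dominating component does not disturb the $p$-divisibility. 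No new geometric input is required beyond the affineness of $g$ and the abuse of Warning~\ref{abusewarnmixedchar}.
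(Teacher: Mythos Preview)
Your proof is correct and follows the same approach as the paper's own (very terse) proof: for the forward direction, compose with $g$ to view an $S'$-map as an $S$-map of the same relative dimension; for the converse, base change along $g$ to produce a finite surjective cover of $X$ admitting a map to $S'$. The paper omits essentially all details, while you spell out the Leray degeneration and the faithfulness of $g_*$ that make the $p$-divisibility transfer; both arguments rely on Warning~\ref{abusewarnmixedchar} to sidestep the integrality bookkeeping.
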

\begin{proof}
For the forward direction, we use the fact that a proper surjective map $f:X \to S'$ defines a proper surjective map $g\circ f:X \to S$ of the same relative dimension. For the converse direction, we use that given a proper surjective map $f:Y \to S$, the base change $Y \times_S S' \to Y$ is a finite surjective map with the additional property that $Y \times_S S'$ admits a proper surjective map to $S'$ of the same relative dimension as $f$. We omit the details.
\end{proof}

Combining the preceding observations, we show how to pass to quasifinite covers.

\begin{lemma}
\label{reducetoqfcov}
If $g:S' \to S$ is quasifinite and surjective and $\calC_d(S)$ is satisfied, then $\calC_d(S')$ is satisfied.
\end{lemma}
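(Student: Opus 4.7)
The plan is to reduce to the two previously handled situations (finite covers and Zariski opens) by invoking Zariski's Main Theorem. Recall that ZMT provides, for any quasifinite separated morphism $g : S' \to S$ of finite type between noetherian schemes, a factorization $g = h \circ j$ in which $j : S' \hookrightarrow T$ is an open immersion and $h : T \to S$ is finite. The excellence/noetherian hypotheses built into Condition $\calC_d$ supply what is needed for this factorization to apply (one may freely assume $g$ is of finite type and separated in this context).

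Given such a factorization, the surjectivity of $g$ forces the surjectivity of $h$: indeed, $h(T) \supseteq h(j(S')) = g(S') = S$. Thus $h : T \to S$ is finite surjective, and Lemma \ref{reducetofcov} transfers the assumed Condition $\calC_d(S)$ to Condition $\calC_d(T)$.

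Next, since $j : S' \hookrightarrow T$ realises $S'$ as a Zariski open of $T$, I would invoke the forward implication of Lemma \ref{reducetolocal}, which was established in its proof and asserts precisely that $\calC_d(T)$ descends to $\calC_d(U)$ for any open $U \subset T$ (using Nagata compactification and flat base change). Applying this to the open immersion $j$ yields $\calC_d(S')$, completing the argument.

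There is no significant obstacle here: once ZMT is in hand, the proof is a two-step chase through the two preceding lemmas. The only thing worth double-checking is that the factorization $g = h \circ j$ is compatible with the integrality conventions in Warning \ref{abusewarnmixedchar}, in the sense that one may freely pass to the closure of $j(S')$ in $T$ (which remains finite over $S$ and still has $S'$ as a dense open) to dispense with stray components of $T$ that do not meet $S'$; this is the standard reduction for ZMT arguments and is already implicit in the framework established above.
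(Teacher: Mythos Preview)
Your proof is correct and follows essentially the same route as the paper's: factor $g$ via Zariski's Main Theorem as an open immersion followed by a finite surjective map, then invoke Lemma~\ref{reducetofcov} and the forward direction of Lemma~\ref{reducetolocal}. Your additional remarks on verifying surjectivity of $h$ and on passing to the closure of $j(S')$ are reasonable elaborations but not essential to the argument.
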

\begin{proof}
By Zariski's main theorem (Th\'eor\`eme $8.12.6$ of \cite{EGA4_3}), we can factor  $g$ as $S' \stackrel{j}{\to} \overline{S'} \stackrel{\overline{g}}{\to} S$ with $j$ an open immersion, and $\overline{g}$ finite surjective. Lemma \ref{reducetofcov} implies that $\calC_d({\overline{S'}})$ is satisfied. The first half of the proof of Lemma \ref{reducetolocal} then shows that $\calC_d(S')$ is also satisfied, as desired.
\end{proof}

Finally, we show how to \'etale localise:

\begin{lemma}
\label{reducetolocalet}
The Condition $\calC_d(S)$ is \'etale local on $S$, i.e., if $g:S' \to S$ is a surjective \'etale morphism, then $\calC_d(S)$ is satisfied  if and only if $\calC_d(S')$ is satisfied.
\end{lemma}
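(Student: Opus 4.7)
The forward direction is immediate: an \'etale surjection is in particular quasi-finite and surjective, so it follows at once from Lemma \ref{reducetoqfcov}. The substantive content is the reverse implication, that $\calC_d(S')$ implies $\calC_d(S)$. The plan is to exploit that $g: S' \to S$ is not merely quasi-finite but faithfully flat, so that the $p$-divisibility of a map of quasi-coherent $\calO_S$-modules can be tested after pulling back along $g^*$, and that flat base change identifies the pullbacks of higher direct images with higher direct images of base changes.

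Given a proper surjective morphism $f: X \to S_j$ of relative dimension $d$ with $X$ integral, first form the \'etale base change $X' := X \times_S S'$ with projections $f': X' \to S'$ and $g_X: X' \to X$. Since $X$ is irreducible, every non-empty component of $X'$ has open image in $X$ (via the \'etale map $g_X$), hence dominates $X$. Apply $\calC_d(S')$ to each irreducible component $X'_\alpha$ of $X'$ to obtain alterations $\pi'_\alpha: Y'_\alpha \to X'_\alpha$ with the desired divisibility. Setting $Y' := \bigsqcup_\alpha Y'_\alpha$, the composition $h: Y' \to X' \to X$ is of finite type, surjective, and generically finite, but not proper. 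I would use Nagata compactification \cite[Theorem 4.1]{ConradNagata} to extend $h$ to a proper morphism $\pi: \overline{Y} \to X$ with $Y' \hookrightarrow \overline{Y}$ a dense open immersion; then $\pi$ is an alteration of $X$.

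To verify $\pi^* \R^i f_* \calO_X \subset p \R^i (f \pi)_*\calO_{\overline{Y}}$ on $S$, pull back along $g$. By faithful flatness, the inclusion on $S$ is equivalent to its pullback on $S'$, which by flat base change becomes $(\pi'')^* \R^i f'_*\calO_{X'} \subset p \R^i(f' \pi'')_*\calO_{\overline{Y}''}$, where $\overline{Y}'' := \overline{Y} \times_X X'$ and $\pi''$ is the base change of $\pi$. The open immersion $Y' \hookrightarrow \overline{Y}$ base-changes along the \'etale map $X' \to X$ to an open immersion $Y' \hookrightarrow \overline{Y}''$ via the graph embedding (which is open because sections of \'etale morphisms are clopen), and $\pi''|_{Y'} = \pi'$; thus the divisibility for $\pi'$ provided by $\calC_d(S')$ controls the situation on the open part of $\overline{Y}''$.

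The hard part will be extending the divisibility from $Y'$ to all of $\overline{Y}''$, since the restriction $\R^i(f'\pi'')_*\calO_{\overline{Y}''} \to \R^i(f'\pi')_*\calO_{Y'}$ induced by the open immersion need not be injective on the image of $\R^i f'_*\calO_{X'}$. To address this, I plan to refine the Nagata compactification: either arrange $\overline{Y}$ to be normal with $Y' \subset \overline{Y}$ having high-codimension complement, so that the restriction map is well-behaved on the relevant class of coherent sheaves, or first replace $Y'$ by a further common alteration (e.g., the closure of the graph over $X' \times_X X'$) which equalises the \'etale descent data before compactifying. An alternative route worth trying, which may be cleaner, is to Zariski-localise on $S'$ via Lemma \ref{reducetolocal} to put $g$ in standard \'etale form so that $S' \hookrightarrow \widetilde{S}'$ with $\widetilde{S}' \to S$ finite surjective, then try to propagate $\calC_d$ from the open $S' \subset \widetilde{S}'$ to the whole of $\widetilde{S}'$ before invoking Lemma \ref{reducetofcov}.
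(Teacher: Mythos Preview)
Your forward direction matches the paper's exactly. For the converse, however, your main approach has a genuine gap precisely where you flag it. After Nagata-compactifying $Y' \hookrightarrow \overline{Y}$ over $X$, you have no control whatsoever over the cohomology contributed by the boundary $\overline{Y} \setminus Y'$: the restriction map $\R^i(f'\pi'')_*\calO_{\overline{Y}''} \to \R^i(f'\pi')_*\calO_{Y'}$ can fail to be injective on the image of $\R^i f'_* \calO_{X'}$, so knowing $p$-divisibility after restriction to $Y'$ tells you nothing about $\overline{Y}''$. Neither of your proposed fixes works. Normalising $\overline{Y}$ and arranging the complement to have high codimension does not force this restriction map to be injective for higher direct images along $f'\pi''$ (that kind of Hartogs-type control applies to $H^0$, not to higher cohomology in general). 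Equalising descent data before compactifying does not help either: the problem is not a mismatch of the alterations over different pieces of $S'$, but simply that compactification introduces new cohomology you cannot bound.

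Your alternative route is closer in spirit to what the paper does, but as stated it runs into the same wall: putting $g$ in standard \'etale form realises $S'$ as an open in a finite $\widetilde{S}' \to S$, and you then need to ``propagate $\calC_d$ from the open $S' \subset \widetilde{S}'$ to the whole of $\widetilde{S}'$''. But Lemma~\ref{reducetolocal} only lets you pass from a Zariski \emph{cover} to the whole scheme, not from a single open, so this step is exactly the unsolved problem. The paper's resolution is to first Zariski-localise on $S$ (not $S'$) so that $S$ is local, and then invoke an observation of Gabber \cite[Lemma~2.1]{Bhattanngrpsch}: one can find a \emph{finite} surjective $\pi: T \to S$ together with a Zariski cover $\sqcup_i U_i \to T$ and a map $h: \sqcup_i U_i \to S'$ over $S$. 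Since $S'$ is local, $h$ is quasifinite and surjective, so Lemma~\ref{reducetoqfcov} gives $\calC_d(\sqcup_i U_i)$ from $\calC_d(S')$; then Lemma~\ref{reducetolocal} gives $\calC_d(T)$; and finally Lemma~\ref{reducetofcov} gives $\calC_d(S)$. The point is that Gabber's lemma manufactures a finite cover of $S$ that is \emph{Zariski}-locally dominated by $S'$, converting the \'etale descent into a chain of the three reductions already established.
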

\begin{proof}
	If $\calC_d(S)$ is satisfied, then $\calC_d(S')$ is also satisfied by Lemma \ref{reducetoqfcov}. For the converse direction, using Lemma \ref{reducetolocal}, we may assume that $S$ and $S'$ are both local schemes. An observation of Gabber (see \cite[Lemma 2.1]{Bhattanngrpsch}) lets us  find a diagram
\[ \xymatrix{ \sqcup_i U_i \ar[r] \ar[d]^h & T \ar[d]^{\pi} \\ S' \ar[r]^g & S }\]
such that $\pi$ is finite surjective, $\sqcup U_i \to T$ forms a Zariski cover, and $h$ is some map. The commutativity of the diagram forces $h$ to be quasifinite, while the locality of $S'$ forces $h$ to be surjective. Since we are assuming that $\calC_d(S')$ is satisfied, Lemma \ref{reducetoqfcov} now implies that $\calC_d(\sqcup_i U_i)$ is satisfied. Using Lemma \ref{reducetolocal}, we deduce that $\calC_d(T)$ is satisfied. Lemma \ref{reducetofcov} then allows us to conclude that $\calC_d(S)$ is satisfied, as desired.
\end{proof}

Having \'etale localised, we prove an approximation result.

\begin{lemma}
\label{reducetoftoverZ}
The Condition $\calC_d(S)$ is satisfied by all excellent schemes $S$ if it is satisfied by all affine schemes $S$ of finite type over $\Z$.
\end{lemma}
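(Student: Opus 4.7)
The argument is a routine Grothendieck spreading-out, using the flexibility provided by the earlier lemmas. By Lemma \ref{reducetolocal}, $\calC_d$ is Zariski-local, so we may assume $S = \Spec(A)$ with $A$ excellent. Fix an integral $X$ and a proper surjective morphism $f : X \to S$ of relative dimension $d$, together with an index $i > 0$. Since $\R^i f_* \calO_X$ is coherent, it is generated as an $A$-module by finitely many classes $\alpha_1, \ldots, \alpha_r \in H^i(X, \calO_X)$. Alterations can be composed, and $p$-divisibility once obtained is preserved by further pullback, so we may iterate and reduce to the task of finding, for a single class $\alpha \in H^i(X, \calO_X)$, an alteration $\pi : Y \to X$ with $\pi^* \alpha \in p \cdot H^i(Y, \calO_Y)$.

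Write $A = \colim_j A_j$ as the filtered colimit of its finitely generated $\Z$-subalgebras; each $\Spec(A_j)$ is affine of finite type over $\Z$ and hence excellent. By the standard limit formalism of \cite{EGA4_3}, \S8 (descent of proper, surjective, finitely presented morphisms of given relative dimension, together with descent of reducedness and irreducibility), for $j$ sufficiently large the datum $(X, f)$ descends to $(X_j, f_j)$ with $f_j : X_j \to \Spec(A_j)$ proper and surjective of relative dimension $d$, and with $X_j$ integral. Moreover, \v{C}ech cohomology of the structure sheaf commutes with filtered colimits of affines, so $H^i(X, \calO_X) = \colim_j H^i(X_j, \calO_{X_j})$; enlarging $j$, we find a class $\alpha_j \in H^i(X_j, \calO_{X_j})$ whose pullback to $X$ is $\alpha$. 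Applying the hypothesis $\calC_d(\Spec(A_j))$ to the irreducible component of $\Spec(A_j)$ dominated by $X_j$ yields an alteration $\pi_j : Y_j \to X_j$ with $\pi_j^* \alpha_j \in p \cdot H^i(Y_j, \calO_{Y_j})$.

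Base-changing $\pi_j$ along $\Spec(A) \to \Spec(A_j)$ gives a proper surjective morphism $Y_j \times_{\Spec(A_j)} \Spec(A) \to X$ whose generic fibre over $X$ remains finite, since generic finiteness is preserved under base change. Taking $Y$ to be an irreducible component of the source, with its reduced subscheme structure, that dominates $X$, yields an alteration $\pi : Y \to X$; and functoriality of pullback identifies $\pi^* \alpha$ with the image in $H^i(Y, \calO_Y)$ of $\pi_j^* \alpha_j \in p \cdot H^i(Y_j, \calO_{Y_j})$, so $\pi^* \alpha \in p \cdot H^i(Y, \calO_Y)$ as required. There is no real conceptual obstacle here; the only point demanding care is the EGA bookkeeping needed to arrange properness, surjectivity, integrality, relative dimension, and the existence of the class $\alpha_j$ simultaneously at a single finite stage $j$, all of which is covered by the cited limit theorems.
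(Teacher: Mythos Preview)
Your proof is correct and follows essentially the same approximation argument as the paper: descend the data $(X,S,f,\alpha)$ to a finite-type $\Z$-scheme, apply the hypothesis there, and base change back, taking a dominating irreducible component. The only differences are cosmetic---the paper invokes Proposition~\ref{reducereldim} to reduce to $d=0$ (a purely notational simplification, as it notes) and is terser about the EGA limit bookkeeping, whereas you spell out more of the descent details; one small point worth making explicit is that you are implicitly working over an irreducible component of $S$ (as the definition of $\calC_d$ dictates), which is what guarantees $A_j \hookrightarrow A$ are inclusions of domains and hence that $X \to X_j$ is dominant, justifying your claim that generic finiteness survives the base change.
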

\begin{proof}
By Proposition \ref{reducereldim}, we may restrict ourselves to $d = 0$ (the resulting simplification is purely notational). Assume that $\calC_d(S)$ is satisfied whenever $S$ is of finite type over $\Z$. Let $S'$ be an arbitrary excellent affine scheme, and let $f:X' \to S'$ be a proper morphism. Given a cohomology class $\alpha' \in H^i(X',\calO_{X'})$, the quadruple $(X',S',f',\alpha')$ is defined by a finite amount of algebraic data. Consequently, it can be obtained from a similar quadruple $(X,S,f,\alpha)$ along a base change $S' \to S$ where $S$ has finite type over $\Z$. By assumption, there exists an alteration $\pi:Y \to X$ such that $\pi^*{\alpha}$ is divisible by $p$. It follows then that a dominating irreducible component of $\pi':Y \times_S S' \to X'$ provides the desired alteration.
\end{proof}

Next, we complete at $p$.

\begin{lemma}
\label{reducetomixedchar}
The Condition $\calC_d(S)$ is satisfied by all affine excellent schemes $S$ if it is satisfied by all affine schemes $S$ of finite type over the Witt vector ring $\W(\overline{\F}_p)$.
\end{lemma}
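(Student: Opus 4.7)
By Lemma~\ref{reducetoftoverZ}, we may assume $S = \Spec(A)$ with $A$ of finite type over $\Z$; the task is then to reduce to the case where $A$ is of finite type over $\W(\overline{\F}_p)$. The guiding idea is to base change along $\Z \to \W(\overline{\F}_p)$, apply the hypothesis, and descend.

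First, on the open $D(p) = \Spec A[1/p]$ the condition $\calC_d$ is trivial: any coherent $\calO_S$-module on which $p$ acts invertibly is automatically $p$-divisible, so one can take $Y = X$. By Lemma~\ref{reducetolocal}, it therefore suffices to verify $\calC_d$ on some Zariski open neighborhood of $V(p) \subset \Spec A$. To leverage the hypothesis, form $A' := A \otimes_\Z \W(\overline{\F}_p)$, a finitely generated $\W(\overline{\F}_p)$-algebra, so that $\calC_d(\Spec A')$ holds by assumption. Given a proper surjective $f:X \to \Spec A$ of relative dimension $d$ with $X$ integral, base change to $\Spec A'$ gives a proper surjective $f':X' \to \Spec A'$, and the hypothesis yields an alteration $\pi':Y' \to X'$ such that $\pi'^*(\R^i f'_*\calO_{X'}) \subset p\cdot(\R^i g'_*\calO_{Y'})$ for $i>0$, where $g' = f' \circ \pi'$.

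The substantive step is to descend $\pi'$ along $\Spec A' \to \Spec A$. The map $A \to A'$ does not fall under Lemmas~\ref{reducetolocal}--\ref{reducetolocalet} directly, since $\W(\overline{\F}_p)$ is neither finite nor \'etale over $\Z$; however, $\W(\overline{\F}_p)$ is the $p$-adic completion of the strict henselization $\Z_{(p)}^{\sh}$, which is itself a filtered colimit of finite \'etale extensions of $\Z_{(p)}$. Since $\pi'$ and the defining data are of finite presentation over $A'$, a standard limit argument---combined with the $p$-adic robustness of divisibility, namely that approximating coefficients modulo a sufficiently high power of $p$ suffices to preserve divisibility of a fixed cohomology class---descends the whole situation to $\Spec(A \otimes_\Z R)$ for some finite \'etale $\Z_{(p)}$-subalgebra $R \subset \W(\overline{\F}_p)$. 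The map $\Spec(A \otimes_\Z R) \to \Spec A$ is then quasi-finite, and \'etale over a Zariski neighborhood of $V(p)$, so Lemmas~\ref{reducetolocalet} and~\ref{reducetoqfcov} finish the descent. The principal obstacle is precisely this last step: bridging the $p$-adic completion built into $\W(\overline{\F}_p)$ with the finite-\'etale descent framework of the earlier lemmas via a careful approximation.
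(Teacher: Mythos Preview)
Your overall architecture matches the paper's: reduce to finite type over $\Z$, localise at $p$, strictly henselise, base change to $\W(\overline{\F}_p)$, apply the hypothesis, then descend. The paper indeed follows exactly this route. The difficulty, as you correctly flag in your last sentence, is the descent step from $\W(\overline{\F}_p)$ back down---and this is where your argument has a genuine gap.

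The phrase ``approximating coefficients modulo a sufficiently high power of $p$'' does not produce a descent of the alteration $\pi':Y' \to X'$. The object you need to descend is a \emph{scheme} $Y'$, proper over $X' = X \otimes_\Z \W(\overline{\F}_p)$, together with the divisibility relation in cohomology. Finite presentation lets you descend $Y'$ to $X \otimes_\Z R_0$ for some finitely generated $\Z$-subalgebra $R_0 \subset \W(\overline{\F}_p)$, but there is no reason $R_0$ lies inside $\Z_{(p)}^{\sh}$: the completion map $\Z_{(p)}^{\sh} \to \W(\overline{\F}_p)$ is far from ind-finite or ind-\'etale, and $\W(\overline{\F}_p)$ contains elements transcendental over $\Z_{(p)}^{\sh}$. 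Working modulo $p^N$ does identify $\W(\overline{\F}_p)/p^N$ with $\Z_{(p)}^{\sh}/p^N$, but a proper scheme over $X \otimes \Z_{(p)}^{\sh}/p^N$ need not lift to one over $X \otimes \Z_{(p)}^{\sh}$, so this does not help.

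The paper closes this gap with a genuinely non-trivial input: Popescu's approximation theorem (N\'eron desingularisation), which says that the regular morphism $\Z_{(p)}^{\sh} \to \W(\overline{\F}_p)$ is ind-\emph{smooth}. Hence $Y'$ and the divisibility descend to $X \otimes_{\Z_{(p)}^{\sh}} R_i$ for some smooth finite-type $\Z_{(p)}^{\sh}$-algebra $R_i$ with non-empty special fibre. One then uses that a smooth scheme with a point in the closed fibre over a strictly henselian local ring admits a section: cutting $R_i$ by hyperplanes produces a quotient $R_i \to T$ with $\Z_{(p)}^{\sh} \to T$ finite \'etale, hence an isomorphism. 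Base-changing $\pi_i$ along this section gives the required alteration over $X \otimes \Z_{(p)}^{\sh}$, after which the \'etale-local lemmas you cite do finish the job. Without Popescu, or some substitute that actually handles the completion, your descent step does not go through.
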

\begin{proof}
We first summarise the idea informally. Using Lemmas \ref{reducetoftoverZ} and \ref{reducetolocalet}, one reduces to checking $\calC_0(S)$ for $S$ of finite type over the strict henselisation $R^\sh$, where $R = \Z_{(p)}$ is the localisation of $\Z$ at $p$. To deduce the statement over $R^\sh$ from that over $\widehat{R^\sh} = \W(\overline{\F}_p)$, we use Popescu's approximation theorem which allows us to write $\widehat{R^\sh}$ is an inductive limit of smooth $R^\sh$-algebras. The crucial point here is that any smooth $R^\sh$-algebra with non-empty special fibre has an $R^\sh$-valued point; the details follow.

By Proposition \ref{reducereldim} and Lemma \ref{reducetoftoverZ}, it suffices to show that $\calC_0(S)$ is satisfied whenever $S$ is affine and of finite type over $\Z$. There is nothing to show when $p \in \calO_S^\ast$. By  Nagata compactification (see \cite[Theorem 4.1]{ConradNagata}), it suffices to check $\calC_0(S)$ for $S$ affine and of finite type over $R = \Z_{(p)}$, the localisation of $\Z$ at $p$. By Lemma \ref{reducetolocalet} and a limit argument, we reduce to verifying $\calC_0(S)$ for $S$ affine and of finite type over $R^\sh$. Let $S$ be such a scheme, and let $f:X \to S$ be an alteration of $S$. As $\bigoplus_{i > 0} H^i(X,\calO_X)$ is a finite $\calO_S$-module, we may work one class at a time. Let $\alpha \in H^i(X,\calO_X)$ be a cohomology class of degree $i > 0$. By assumption, there exists an alteration $\widehat{\pi}:\widehat{Y} \to X_{\widehat{R^\sh}}$ such that $p | \widehat{\pi}^*(\alpha)$, where $\widehat{R^\sh} \simeq \W(\overline{\F}_p)$ is the $p$-adic completion of $R^\sh$. By the main theorem of \cite{PopescuNeronDesing}, the completion $R^\sh \to \widehat{R^\sh}$ is an ind-smooth morphism, i.e., one can write $\widehat{R^\sh} = \colim R_i$, where $R^\sh \to R_i$ is finite type and smooth. Furthermore, since $R^\sh \to \widehat{R^\sh}$ has a non-empty special fibre, the same is true for $R^\sh \to R_i$ for each $i$. By virtue of everything being of finite presentation, there exists an index $i$ and an alteration $\pi_i:Y_i \to X_{R_i}$ giving $\widehat{\pi}$ over $\widehat{R^\sh}$ such that $p | \pi_i^*(\alpha)$. As $R^\sh \to R_i$ is smooth with non-empty special fibre, we can cut $R_i$ with appropriately chosen hyperplane sections to find a quotient $R_i \to T$ such that the composite $R^\sh \to R_i \to T$ is finite \'etale. In fact, $R^\sh \simeq T$ since $R^\sh$ is strictly henselian and $T$ is local.  The fibre $\pi_i \times_{R_i} T$ is then easily seen to do the job.
\end{proof}

We have reduced the proof of Theorem \ref{mixedcharpdiv} to showing Condition $\calC_0(S)$ for affine schemes $S$ of finite type over $B = \Spec(\W(\overline{\F}_p))$. Given an alteration of such an $S$, the non-quasi-finite locus of the alteration is a closed subset $Z \subset S$ of codimension $\geq 2$ that is often called the {\em center} of the alteration. Our strategy for proving Theorem \ref{mixedcharpdiv} is to construct, at the expense of localising a little on $S$, a partial compactification $S \hookrightarrow \overline{S}$ with $\overline{S}$ proper over a lower dimensional base such that $Z$ remains closed in $\overline{S}$. This last condition ensures that the alteration in question can be extended to an alteration of $\overline{S}$ without changing the center. As the center has not changed, the cohomology of the newly created alteration maps onto that of the older alteration, thereby paving the way for an inductive argument via Proposition \ref{reducereldim}. The precise properties needed to carry out the above argument are ensured by the presentation lemma that follows.

\begin{lemma}
\label{geompresentlemma}
Let $B = \Spec(\W(\overline{\F}_p))$ be the maximal unramified extension of $\Z_p$. Let $\widehat{S}$ be a local,  flat,  and essentially finitely presented $B$-scheme of relative dimension $\geq 1$. Given a closed subset $\widehat{Z} \subset \widehat{S}$ of codimension $\geq 2$, we can find a diagram of $B$-schemes of the form
\[ \xymatrix{ s \in Z \ar[r]^{i_Z} \ar[rrd] & S \ar[rd] \ar[r]^j & \overline{S} \ar[d]^{\pi} & \partial\overline{S} \ar[ld] \ar[l]_i \\
			& & W & } \]
satisfying the following:
\begin{enumerate}
\item All the schemes in the diagram above are of finite type over $B$.
\item $S$ is an integral scheme, $i_Z$ is a closed subscheme, $s$ is a closed point, and the germ of $i_Z$ at $s$ agrees with $\widehat{Z} \subset \widehat{S}$.
\item $i$ is the inclusion of a Cartier divisor, and $j$ is the open dense complement of $i$.
\item $W$ is an integral affine scheme with $\dim(W) = \dim(S) - 1$.
\item $\pi$ is proper, $\pi\mid_S$ is affine, and both these maps have fibres of equidimension $1$.
\item $\pi\mid_Z$ and $\pi\mid_{\partial\overline{S}}$ are finite. In particular, $j(i_Z(Z))$ is closed in $\overline{S}$.
\end{enumerate}
\end{lemma}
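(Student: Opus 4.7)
My approach proceeds in four steps: algebraize $\widehat{S}$, projectively compactify, build a fibration $\pi$ via generic linear projection, and restrict to an affine open.

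\emph{Step 1.} Since $\widehat{S}$ is essentially of finite presentation and local over $B$, it is the localization of a finite-type integral $B$-scheme at a closed point. Standard arguments then produce an integral affine finite-type $B$-scheme $S$, a closed point $s$, and a closed subscheme $Z \subset S$ (of global codimension $\geq 2$ after shrinking $S$ and discarding components of $Z$ missing $s$) whose germ at $s$ recovers $\widehat{Z} \subset \widehat{S}$. Set $d = \dim S \geq 2$.

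\emph{Step 2.} Fix a closed embedding $S \hookrightarrow \A^N_B \subset \P^N_B$ and let $\overline{S}_0 \subset \P^N_B$ be the scheme-theoretic closure. The boundary $\partial \overline{S}_0 = \overline{S}_0 \cap H_\infty$ (where $H_\infty$ is the hyperplane at infinity) is the support of the ample Cartier divisor $\calO_{\overline{S}_0}(1)$, so $S$ is the complement of a relatively ample divisor in $\overline{S}_0$; consequently, for any proper morphism $\overline{S}_0 \to W$ we construct, the restriction to $S$ is automatically affine.

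\emph{Step 3.} Choose a generic $B$-linear subspace $L \subset H_\infty$ of dimension $N - d$. A dimension count shows that $L \cap \overline{S}_0$ is a finite set, automatically contained in $\partial \overline{S}_0$. Let $\overline{S} \to \overline{S}_0$ be the blow-up along $L \cap \overline{S}_0$; since these points lie in $\partial \overline{S}_0$, the blow-up leaves $S$ undisturbed. The projection from $L$ then descends to a morphism $\pi_0 : \overline{S} \to \P^{d-1}_B$. A Bertini-style genericity argument on the Grassmannian of centers, applied separately to the two residue fibres of $B$ (each with infinite residue field) and combined via Hensel's lemma and the smoothness of the Grassmannian over $B$, lets us simultaneously arrange: (a) $\pi_0|_{\partial \overline{S}}$ and $\pi_0|_{\overline{Z}}$ are generically finite (where $\overline{Z}$ is the closure of $Z$ in $\overline{S}$); (b) the locus in $\P^{d-1}_B$ over which $\pi_0$ fails to have equidimensional fibres of dimension $1$ is a proper closed subset; and (c) the point $\pi_0(s)$ lies outside both this closed subset and the proper closed image $\pi_0(\overline{Z} \cap \partial \overline{S})$, whose dimension is at most $\dim \overline{Z} \leq d - 2 < d - 1$.

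\emph{Step 4.} Pick an integral affine open $W \subset \P^{d-1}_B$ containing $\pi_0(s)$, contained in the dense open produced by step 3, and disjoint from $\pi_0(\overline{Z} \cap \partial \overline{S})$; this is possible by (c). Setting $\overline{S} := \pi_0^{-1}(W)$, $S := S \cap \pi_0^{-1}(W)$, and $\pi := \pi_0|_{\pi_0^{-1}(W)}$ yields the desired diagram, with finiteness of $\pi|_Z$ and $\pi|_{\partial \overline{S}}$ following from properness plus quasi-finiteness. The \textbf{main obstacle} is step 3: one must verify that a single center $L$ simultaneously satisfies many open conditions (on the base locus, on fibre dimensions of $\pi_0$, on the restrictions to $\overline{Z}$ and $\partial \overline{S}$, and on the behavior of the fibre through $s$) on the Grassmannian of centers. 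Each condition is open-dense on each residue fibre of $B$; combining these via Hensel's lemma on the smooth $B$-Grassmannian produces the required $L$.
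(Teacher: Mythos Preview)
Your outline follows the same four-beat rhythm as the paper---algebraize, compactify, project to a curve fibration, restrict to a good affine open---but differs at the crucial third step. The paper first runs a Noether-normalization argument: it repeatedly projects to obtain a \emph{finite} morphism $\phi:\overline{T}\to\P^d_B$ with $T=\phi^{-1}(U)$ for an affine $U$, and only then projects from a \emph{single} $B$-point $p\in U$ chosen to miss $\phi(\overline{Y}\cup\partial\overline{T})$. You instead project directly from a high-dimensional linear centre $L$ lying in the hyperplane at infinity. That is a legitimate alternative strategy, but your write-up has two real problems.

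First, your dimension bookkeeping is inconsistent. With $d=\dim S$ the absolute dimension, the target $\P^{d-1}_B$ has absolute dimension $d$, so your $\pi_0$ would be generically finite rather than a curve fibration, and an affine open $W\subset\P^{d-1}_B$ would have $\dim W=d$, not $d-1$. Either $d$ should be the relative dimension over $B$, or the target should be $\P^{d-2}_B$ and the centre one dimension larger.

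Second, and more seriously, once the dimensions are corrected your centre $L\subset H_\infty$ is forced to meet $\partial\overline{S}_0$: inside $H_\infty$ the two have complementary dimension, so the intersection is always nonempty. Hence the blow-up introduces an exceptional locus $E$ that becomes part of $\partial\overline{S}$, and you must show $\pi_0|_E$ is finite over your open $W$ while still containing $\pi_0(s)$. Your Step~3 does not mention $E$ at all, and the catch-all ``Bertini-style genericity'' does not cover it: the bad locus of $\pi_0|_E$ depends on $L$ in a way you have not analyzed, and you need $\pi_0(s)$ to avoid it for the same generic $L$. The paper's two-stage projection is designed precisely to dodge this: because $\phi$ is finite and $p$ is a single $B$-point chosen off $\phi(\overline{Y}\cup\partial\overline{T})$, neither $\overline{Y}$ nor $\partial\overline{T}$ meets the blow-up centre, so their maps to $\P^{d-1}_B$ are honest linear projections of hypersurfaces from an exterior point and are visibly finite. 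Your approach can very likely be completed, but the genericity package in Step~3 is where the content lies, and as written it is asserted rather than proved.
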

\begin{proof}
We first remark that since $\widehat{S}$ is a local $B$-scheme, the residue field at the closed point of $\widehat{S}$ has positive characteristic.  We begin by choosing an ad hoc finite type model of $\widehat{Z} \hookrightarrow \widehat{S}$ over $B$, i.e., we find a map $i_Y:Y \to T$ and a point $y \in Y$  satisfying the following: the map $i_Y$ is a closed immersion of finite type integral $B$-schemes with codimension $\geq 2$, and the germ of $i_Y$ at $y$ is the given map $\widehat{Z} \hookrightarrow \widehat{S}$. Next, we choose an ad hoc compactification $T \hookrightarrow \overline{T}$ over $B$, i.e., $\overline{T}$ is a projective $B$-scheme containing $T$ as a dense open subscheme. By replacing $T$ with the complement of a suitable ample divisor missing the point $y$ in the special fibre (and hence in all of $\overline{T}$ by properness), we may assume that the complement $\partial\overline{T}$ is an ample divisor flat over $B$. We denote by $\overline{Y}$ the closure of $Y$ in $\overline{T}$, and by $\partial\overline{Y} = \overline{Y} - Y$ its boundary. As $Y$ had codimension $\geq 2$, its closure $\overline{Y}$ also has codimension $\geq 2$, while the boundary $\partial\overline{Y}$ has codimension $\geq 3$ as $Y$ is not contained in $\partial\overline{T}$. We will modify $T$ and $\overline{T}$ to eventually find the required $S$ and $\overline{S}$.

Let $d$ denote the relative dimension of $\overline{T}$ over $B$. By construction, this is the relative dimension of $\widehat{S}$ over $B$ as well. The next step is to find a finite morphism $\phi:\overline{T} \to \P^d$ such that $\phi(y) \notin \phi(\partial\overline{T})$. We find such a map by repeatedly projecting. In slightly more detail, say we have a finite morphism $\phi:\overline{T} \to \P^N$ for some $N > d$ such that $\phi(y) \notin \phi(\partial\overline{T})$. Then $\phi(\partial\overline{T})$ is a closed subscheme of codimension $\geq 2$. Moreover, by the flatness of $\partial\overline{T}$ over $B$, the same is true in the special fibre $\P^N \times_B \overline{\F}_p$. By basic facts of projective geometry in the special fibre, we can find a line $\ell$ through $\phi(y)$ that does not meet $\phi(\partial\overline{T})$. By the ampleness of $\partial\overline{T}$, this line cannot entirely be contained in $\phi(\overline{T})$. Thus, we can find a point on it that is not contained in $\phi(\overline{T})$. By projecting from this point, we see that we can find a finite morphism $\phi:\overline{T} \to \P^{N-1}$ such that $\phi(y) \notin \phi(\partial\overline{T})$. So far the discussion was taking place in the special fibre. However, by choosing a lift of this point to a $B$-point by smoothness of $\P^N$ and using the properness of $\partial\overline{T}$ to transfer the non-intersection condition from the special fibre to the total space, this construction can be made over $B$. Continuing this way, we can find a finite morphism $\phi:\overline{T} \to \P^d$ with the same property. As $\phi(\partial\overline{T})$ is now a very ample Cartier divisor, its complement $U \hookrightarrow \P^d$ is an open affine containing $\phi(y)$. We may now replace $T$ with $\phi^{-1}(U)$ and $Y$ with $\overline{Y} \cap \phi^{-1}(U)$ (this does not change the closure as $y \in \overline{Y} \cap \phi^{-1}(U)$ and $\overline{Y}$ is irreducible) to assume that we have produced the following: an algebraisation $i_Y:Y \to T$ of $\widehat{Z} \to \widehat{S}$ for some point $y \in Y$, a compactification $T \hookrightarrow \overline{T}$, and a finite morphism $\phi:\overline{T} \to \P^d$ such that  $T = \phi^{-1}(U)$ for some open affine $U \in \P^d$ that is the complement of a very ample divisor $H$, flat over $B$.

Now we project once more to obtain the desired curve fibration. As explained earlier, the closure $\overline{Y}$ has codimension $\geq 2$ in $\overline{T}$. Since we do not know that it is flat over $B$, the most we can say is that its image $\phi(\overline{Y})$ has codimension $\geq 1$ in the special fibre $\P^d \times_B \overline{\F}_p$. On the other hand, we know that $\partial\overline{T}$ was a $B$-flat divisor. Thus, its image $\phi(\partial\overline{T})$ also has codimension $\geq 1$ in the special fibre $\P^d \times_B \overline{\F}_p$. It follows that $\phi(\overline{Y} \cup \partial\overline{T})$ has codimension $\geq 1$ in the special fibre $\P^d \times_B \overline{\F}_p$. By choosing a closed point not in this image inside $U$ and lifting to a $B$-point as above, we find a $B$-point $p:B \to U \subset \P^d$ whose image does not intersect $\phi(\overline{Y} \cup \partial\overline{T})$. Projecting from this point gives rise to the following diagram:
\[ \xymatrix{ \Bl_{\phi^{-1}(p)}(T) \ar[r]^a \ar[d] & \Bl_{\phi^{-1}(p)}(\overline{T}) \ar[r]^-b \ar[d] & \Bl_p(\P^d) \ar[r]^-c \ar[d] & \P(T_p(\P^d)) \simeq \P^{d-1} \\
	T \ar[r] & \overline{T} \ar[r] & \P^d. & }\]
The horizontal maps enjoy the following properties: $c$ is a $\P^1$-fibration (in the Zariski topology), $b$ is a finite surjective morphism, and $a$ is an open immersion. In particular, the composite map $cb$ is a proper morphism with fibres of equidimension $1$. As the map $\phi:\overline{T} \to \P^d$ was chosen to ensure that $\phi^{-1}(U) = T$, the composite map $cba$ can be factored as 
\[\Bl_{\phi^{-1}(p)}(T) \to \Bl_{p}(U) \to \P^{d-1}. \]
The first map in this composition is finite surjective as $\phi$ is so, while the second map is an affine morphism with fibres of equidimension $1$ thanks to Lemma \ref{blowuplemma} below. It follows that the composite map $cba$ is an affine morphism with fibres of equidimension $1$. Lastly, by our choice of $p$, the map $cb$ restricts to a finite map on $\overline{Y}$ and $\overline{T}$ (here we identify subschemes of $\overline{T}$ not intersectng $\phi^{-1}(p)$ with those of the blowup). As explained earlier, the boundary $\partial\overline{Y}$ has codimension $\geq 3$ in $\overline{T}$. This implies that its special fibre has codimension $\geq 2$. Therefore, its image in $\P^{d-1}$ has codimension $\geq 1$. It follows that we can find an open affine $W \hookrightarrow \P^{d-1}$ not meeting the image of $\phi(\partial\overline{Y})$. Restricting the entire picture thus obtained to $W$, we find a diagram that looks like
\[ \xymatrix{ y \in Y \ar[r] \ar[rrd] & \Bl_{\phi^{-1}(p)}(T)_W \ar[r] \ar[rd] & \Bl_{\phi^{-1}(p)}(\overline{T})_W \ar[d]^\pi & \partial\overline{T}_W \ar[l] \ar[ld] \\
							& 									 & W.  & } \]
Setting $s = y$, $Z = Y$, $S = \Bl_{\phi^{-1}(p)}(T)_W$, $\overline{S} = \Bl_{\phi^{-1}(p)}(\overline{T})_W$, and $\partial\overline{S} = \partial\overline{T}_W$ implies the claim.
\end{proof}

The following elementary fact concerning blowups was used in Lemma \ref{geompresentlemma}:

\begin{proposition}
\label{blowuplemma}
Fix an affine regular base scheme $B$. Let $H \hookrightarrow \P^n \times B$ be a divisor that is flat and relatively ample over $B$, and let $U$ be the complement. For any point $p \in U(B)$, the blowup map $\Bl_p(U) \to \P(T_p(\P^n))$ is an affine morphism with fibres of equidimension $1$.
\end{proposition}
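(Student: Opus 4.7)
The plan is to realise the map $\Bl_p(U) \to \P(T_p(\P^n))$ as an open restriction of a Zariski-locally trivial $\P^1$-bundle, and to exhibit the removed locus as a relatively ample Cartier divisor. First, I would invoke the standard identification $\Bl_p(\P^n \times B) \cong \P(\calO \oplus \calO(-1))$ over $\P(T_p(\P^n)) = \P^{n-1} \times B$, which exhibits the projection $\pi:\Bl_p(\P^n \times B) \to \P(T_p(\P^n))$ as a Zariski-locally trivial $\P^1$-bundle (geometrically, $\pi$ sends a point to the direction of the unique line through $p$ containing it). Because $p(B) \subset U$ is disjoint from $H$, the strict transform $\tilde H$ of $H$ in $\Bl_p(\P^n \times B)$ coincides with the total (scheme-theoretic) preimage of $H$, and $\Bl_p(U) = \Bl_p(\P^n \times B) \setminus \tilde H$.

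The core step is to verify that $\tilde H$ is $\pi$-ample. Since $\pi$ is projective and $\P^{n-1} \times B$ is noetherian, it suffices to check ampleness fibre by fibre. The fibre of $\pi$ over a point $\bar s \in \P^{n-1} \times B$ lying above $b \in B$ is canonically the line $\ell \subset \P^n_{\kappa(b)}$ through $p(b)$ in the direction $\bar s$, and under this identification $\tilde H \cap \pi^{-1}(\bar s)$ is exactly $\ell \cap H_b$. Since $H$ is relatively ample (hence in particular fibrewise ample over $B$) and $\ell \not\subset H_b$ as $p(b) \in \ell \setminus H_b$, the intersection $\ell \cap H_b$ is a non-empty effective divisor on $\ell \cong \P^1_{\kappa(b)}$ of positive degree, and any such divisor on $\P^1$ is ample. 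This yields the $\pi$-ampleness of $\tilde H$.

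To conclude, I would apply the standard fact that the complement of a relatively ample Cartier divisor in a projective morphism is affine over the base; for $\pi$ and $\tilde H$ this gives the affineness of $\Bl_p(U) \to \P(T_p(\P^n))$. The equidimensionality of the fibres is then immediate, as each fibre is the complement of a non-empty finite subscheme inside a $\P^1$, hence an affine curve of pure dimension $1$. The main bookkeeping obstacle is the verification of the identification of $\Bl_p(\P^n \times B)$ with the projective bundle in families over $B$ (so that the fibrewise criterion for ampleness applies cleanly), but this is routine once one observes that $p:B \to \P^n \times B$ is a section missing $H$ and that $H$ is $B$-flat.
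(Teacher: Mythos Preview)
Your proof is correct and follows essentially the same approach as the paper: both identify $\Bl_p(\P^n \times B) \to \P(T_p(\P^n))$ as a $\P^1$-bundle, observe that the pullback of $H$ is fibrewise ample (each fibre being a line through $p$ meeting $H$ in a nonempty divisor), and deduce affineness of the complement. The only difference is packaging: the paper runs the argument through semicontinuity of cohomology and Serre's criterion (using regularity to pass from vector bundles to arbitrary coherent sheaves), whereas you invoke the fibrewise criterion for relative ampleness and the standard fact that the complement of a relatively ample Cartier divisor is relatively affine---a slightly more direct route to the same conclusion.
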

\begin{proof}
Let $b:\Bl_p(\P^n) \to \P^n$ be the blowup map, and let $\pi:\Bl_p(\P^n) \to \P(T_p(\P^n))$ be the morphism defined by projection. It is easy to see that $\pi$ is a $\P^1$-bundle. In fact, it can be identified with the projectivisation of the rank $2$ vector bundle $\calO(1) \oplus \calO$ on $\P(T_p(\P^n))$, with the exceptional divisor corresponding to the zero section of $\calO(1)$. As the ample divisor $H$ was disjoint from the center of the blowup, $b^*(H)$ defines an ample divisor on the fibres of $\pi$. By semicontinuity, for any vector bundle $\calE \in \Vect(\Bl_p(\P^n))$, the higher pushforwards $R^i\pi_*\calE(nH)$ vanish for $i > 0$ provided $n$ is sufficiently large. By the regularity of all schemes in sight, the same is true for any coherent sheaf. As $\Bl_p(U) \hookrightarrow \Bl_p(\P^n)$ is the complement of $b^*(H)$, the scheme $\pi|_{\Bl_p(U)}$ is affine by Serre's criterion. The claim about the fibre dimension follows from the observation that, geometrically, the fibre of $\pi|_{\Bl_p(U)}$ over a line $\ell$ passing through $p$, viewed as a point $[\ell] \in \P(T_p(\P^n))$, is simply $\ell \cap U$ which is a non-empty affine curve in $\ell$ thanks to the choice of $p$ and the positivity of $H$
\end{proof}

Before proceeding to the proof of Theorem \ref{mixedcharpdiv}, we record a cohomological consequence of certain geometric hypotheses. The hypotheses in question are the kind ensured by Lemma \ref{geompresentlemma}, while the consequences are those used in proof of Theorem \ref{mixedcharpdiv}.

\begin{proposition}
\label{cohvanprop}
Fix a noetherian scheme $\overline{X}$ of finite Krull dimension. Let $j:X \hookrightarrow \overline{X}$ be a dense open immersion whose complement $\Delta \subset \overline{X}$ is affine and the support of a Cartier divisor. Then $H^i(\overline{X},\calO) \to H^i(X,\calO)$ is surjective for all $i > 0$.
\end{proposition}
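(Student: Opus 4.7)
The approach I would take is to exploit the Cartier divisor hypothesis to express $j_*\calO_X$ as a filtered colimit of line bundles on $\overline{X}$, and then use the affineness of $\Delta$ to show that each resulting inclusion induces a surjection on higher cohomology.

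I would begin by fixing an effective Cartier divisor $D$ on $\overline{X}$ with support $\Delta$, and letting $s \in \Gamma(\overline{X}, \calO(D))$ denote its tautological section. A local calculation (in an affine open $\Spec A$ on which $D = V(f)$, the complement is the basic open $\Spec A[1/f]$) shows that $j$ is an affine morphism, so $\R j_*\calO_X = j_*\calO_X$. A second local calculation identifies this quasi-coherent sheaf with the filtered colimit $\colim_n \calO(nD)$, where the transition maps are multiplication by $s$. Since $\overline{X}$ is noetherian of finite Krull dimension, sheaf cohomology commutes with filtered colimits of quasi-coherent sheaves, giving
\[ H^i(X, \calO_X) \;\simeq\; \colim_n H^i(\overline{X}, \calO(nD)). \]
It therefore suffices to prove that the map $H^i(\overline{X}, \calO_{\overline{X}}) \to H^i(\overline{X}, \calO(nD))$ induced by multiplication by $s^n$ is surjective for every $n \geq 0$ and $i > 0$.

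For this, I would use the short exact sequence
\[ 0 \to \calO_{\overline{X}} \xrightarrow{\;s^n\;} \calO(nD) \to \calO(nD)|_{nD} \to 0, \]
where $nD$ denotes the Cartier divisor cut out by $s^n$. The cokernel is a quasi-coherent sheaf on the closed subscheme $nD \subset \overline{X}$. Since $(nD)_{\mathrm{red}} = \Delta$ is affine by hypothesis and $nD$ is noetherian, $nD$ is itself affine (affineness is insensitive to nilpotents for noetherian schemes), so the cokernel has vanishing cohomology in positive degrees. The long exact sequence then yields the required surjectivity, and passing to the colimit in $n$ completes the argument.

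The main obstacle is really in setting up the framework: checking that $j$ is affine and identifying $j_*\calO_X$ with $\colim_n \calO(nD)$. Both reduce to elementary local calculations with $D$, but one must keep the transition maps straight. The other delicate point is ensuring that each thickening $nD$ (not just $\Delta$) is affine, which rests on the noetherian hypothesis together with the assumed affineness of $\Delta$; both hypotheses in the statement are thus used in an essential way.
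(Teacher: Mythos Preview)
Your proof is correct and takes essentially the same approach as the paper: both identify $j_*\calO_X \simeq \R j_*\calO_X$ with $\colim_n \calO(nD)$ and then use affineness of the complement to show the quotients $\calO(nD)/\calO_{\overline X}$ have no higher cohomology. The only cosmetic difference is that the paper filters this quotient further with graded pieces supported on $\Delta$ itself and argues by d\'evissage, whereas you observe directly that the thickening $nD$ is affine (since affineness is insensitive to nilpotents on noetherian schemes) and conclude in one step.
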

\begin{proof}
As $\Delta \subset \overline{X}$ is the support of a Cartier divisor, the complement $j$ is an affine map. This implies that 
\[j_*\calO_X \simeq \R j_*\calO_X.\]
Now consider the exact sequence
\[0 \to \calO_{\overline{X}} \to j_*\calO_X \to \calQ \to 0\]
where $\calQ$ is defined to be the cokernel. As $j_*\calO_X \simeq \R j_*\calO_X$, the middle term in the preceding sequence computes $H^i(X,\calO)$. By the associated long exact sequence on cohomology,  to show the claim, it suffices to show that $H^i(\overline{X},\calQ) = 0$ for $i > 0$. By construction, we have a presentation
\[j_*\calO_X = \colim_n \calO_{\overline{X}} (n\Delta).\]
Thus, we also have a presentation
\[ \calQ = \colim_n \calO_{\overline{X}}(n\Delta) / \calO_{\overline{X}}.\]
This presentation defines a natural increasing filtration $F^\bullet(\calQ)$ with
\[F^n(\calQ) = \calO_{\overline{X}}(n\Delta)/\calO_{\overline{X}}\]
for $n \geq 0$. The associated graded pieces of this filtration are
\[ \gr^n_F(\calQ) = \calO_{\overline{X}}(n\Delta) \otimes \calO_{\Delta}. \]
In particular, these pieces are supported on $\Delta$ which is an affine scheme by assumption. Consequently, these pieces have no higher cohomology. By a standard devissage argument, the sheaves $F^n(\calQ)$ have no higher cohomology for any $n$. Then $\calQ$ has no higher cohomology either (as cohomology commutes with filtered colimits of sheaves on noetherian schemes of finite Krull dimension), establishing the claim.
\end{proof}

We now have enough tools to finish proving Theorem \ref{mixedcharpdiv}.

\begin{proof}[Proof of main theorem]
Our goal is to show that Condition $\calC_0(\widehat{S})$ is satisfied by an induction on $\dim(\widehat{S})$. By Lemma \ref{reducetomixedchar} and Lemma \ref{reducetolocal}, we may assume that $\widehat{S}$ is a local integral scheme that is essentially of finite type over $B$ with a characteristic $p$ residue field at the closed point. We give an argument below in the case that $\widehat{S}$ is flat over $B$. The only way this flatness fails to occur is if $\widehat{S}$ is an $\F_p$-algebra. The reader can check that all our arguments go through in this case as well, provided a few trivial modifications are made to Lemma \ref{geompresentlemma}. We prefer to not make these modifications here for clarity of exposition.

If $\dim(\widehat{S}) = 1$, then any alteration of $\widehat{S}$ can be dominated by a finite morphism, so there is nothing to show as finite morphisms have no higher cohomology. We may therefore assume that the relative dimension of $\widehat{S}$ over $B$ is at least $1$. 

With the assumptions as above, given an alteration $\widehat{f}:\widehat{X} \to \widehat{S}$, we want to find an alteration $\widehat{g}:\widehat{Y} \to \widehat{X}$ such that $\widehat{g}^*(H^i(\widehat{X},\calO)) \subset p(H^i(\widehat{Y},\calO))$. As $\widehat{f}$ is an alteration, the center $\widehat{Z} \subset \widehat{S}$ is a closed subset of codimension $\geq 2$ such that $\widehat{f}$ is finite away from $\widehat{Z}$. Applying the conclusion of Proposition \ref{geompresentlemma}, we can find a diagram
\[ \xymatrix{ s \in Z \ar[r]^{i_Z} \ar[rrd] & S \ar[rd] \ar[r]^j & \overline{S} \ar[d]^{\pi} & \partial\overline{S} \ar[ld] \ar[l]_i \\
			& & W & } \]
satisfying the conditions guaranteed by Proposition \ref{geompresentlemma}. The next step is to extend the alteration $\widehat{f}$ to an alteration $\overline{f}:\overline{X}\to \overline{S}$ which gives $\widehat{f}$ as the germ at $s$ and has center $Z \subset \overline{S}$. This can be accomplished as follows: normalising $\overline{S}-Z$ in the function field of $\widehat{X}$ gives rise to a finite morphism $X' \to \overline{S} - Z$ which agrees with $\widehat{f}$ over $\widehat{S} - \widehat{Z}$. Glueing $X'$ and $\widehat{X}$ along their fibres over $\widehat{S} - \widehat{Z}$ (and spreading out a little) defines an alteration $\overline{f}'$ of an open subset $U \subset \overline{S}$ satisfying the following:
\begin{itemize}
\item $\overline{S} - U \subset Z - (Z \cap \widehat{S})$, and therefore, $s \in U$.
\item $\overline{f}'$ agrees with $\widehat{f}$ at $s$, and $\overline{f}'$ is finite on $U - (Z \cap U)$.
\end{itemize}
By Nagata compactification (see \cite[Theorem 4.1]{ConradNagata}), we obtain an alteration $\overline{f}: \overline{X} \to \overline{S}$ which is finite away from $Z$ and agrees with $\widehat{f}$ over $\widehat{S}$. Let $f:X \to S$ denote the restriction of $\overline{f}$ to $S$. We summarise the preceding constructions by the following diagram:
\[ \xymatrix{ X \times_S Z \ar[r] \ar[d]^{f_Z} & X \ar[d]^f \ar[r]^{j_X} & \overline{X} \ar[d]^{\overline{f}} & \Delta = \partial\overline{S} \times_{\overline{S}} \overline{X} \ar[l]_-{i_X} \ar[d]^{f_{\partial\overline{S}}} \\ 
			s \in Z \ar[r]^{i_Z} \ar[rrd] & S \ar[rd] \ar[r]^j & \overline{S} \ar[d]^{\pi} & \partial\overline{S} \ar[ld] \ar[l]_i \\
			& & W & } \]
Here the first row is obtained by base change from the second row via  $\overline{f}$. In particular, $i_X$ is the inclusion of a Cartier divisor. As $\overline{f}$ is finite away from the closed set $Z$ which does not meet $\partial\overline{S}$, the map $f_{\partial\overline{S}}$ is finite. In particular, the scheme $\partial\overline{S} \times_{\overline{S}} \overline{X}$ is affine. Applying Proposition \ref{cohvanprop} to the map $i_X$, we find that $H^i(\overline{X},\calO) \to H^i(X,\calO)$ is surjective for $i > 0$. Since $\dim(W) < \dim(S)$, the inductive hypothesis and Proposition \ref{reducereldim} ensure that Condition $\calC_d(W)$ is true for all $d$. As $\overline{X} \to W$ is proper surjective, we can find an alteration $\overline{g}:\overline{Y} \to \overline{X}$ such that $\overline{g}^*(H^i(\overline{X},\calO)) \subset p(H^i(\overline{Y},\calO))$. It follows that a similar $p$-divisibility statement holds for the alteration $g:Y \to X$ obtained by restricting $\overline{g}$ to $X \hookrightarrow \overline{X}$. Lastly, by flat base change, we know that $H^i(X,\calO)$ generates $H^i(\widehat{X},\calO)$ as a module over $\Gamma(\widehat{S},\calO)$. Thus, pulling back this alteration to $\widehat{X} \hookrightarrow X$ produces the desired alteration $\widehat{g}:\widehat{Y} \to \widehat{X}$.
\end{proof}

\begin{remark}
One noteworthy feature of the proof of Proposition \ref{killbyalt} is the following: while trying to show $\calC_0(S)$ is satisfied, we use that $\calC_d(S')$ is satisfied for $d > 0$ and certain affine schemes $S'$ with $\dim(S') < \dim(S)$. We are allowed to make such arguments thanks to Proposition \ref{reducereldim} and induction. However, this phenomenon explains why Proposition \ref{reducereldim} appears before Proposition \ref{killbyalt} in this paper, despite the relevant statements naturally preferring the opposite order.
\end{remark}

\begin{remark}
Theorem \ref{mixedcharpdiv}, while ostensibly being a statement about coherent cohomology, is actually motivic in that it admits obvious analogues for most natural cohomology theories such as de Rham cohomology or \'etale cohomology. For the former, one can use Theorem \ref{mixedcharpdiv} and the Hodge-to-de Rham spectral sequence to reduce to proving a $p$-divisibility statement for $H^i(X,\Omega^j_{X/S})$ with $j > 0$. Choosing local representatives for differential forms and extracting $p$-th roots out of the relevant functions can then be shown to solve the problem. In \'etale cohomology, there is an even stronger statement: for any noetherian excellent scheme $X$, there exist {\em finite} covers $\pi:Y \to X$ such that $\pi^*(H^i_\et(X,\Z_p)) \subset p(H^i_\et(Y,\Z_p))$ for any fixed $i > 0$; this statement follows from \cite[Theorem 1.1]{Bhattanngrpsch}  using the exact sequences of (continuous $p$-adic) \'etale sheaves
\[ 0 \to \Z_p \stackrel{p}{\to} \Z_p \to \Z/p \to 0.\]
We hope to find finite covers that work for coherent cohomology (see Remark \ref{rmk:findfinitecov}), but cannot do so yet.
\end{remark}

\begin{remark}
The proof of Theorem \ref{mixedcharpdiv} actually shows: given a proper morphism $f:X \to S$ with $S$ excellent, there exists wan alteration $\pi:Y \to S$ such that, with $g = \pi \circ f$, we have:
\begin{enumerate}
\item $\pi^*(\R^1 f_*\calO_X) \subset p (\R^1 g_*\calO_Y)$.
\item The map $\tau_{\geq 2} \R f_*\calO_X \to \tau_{\geq 2} \R g_*\calO_Y$ is divisible by $p$ as a morphism in $\D(\Coh(S))$.
\end{enumerate}
The reason one has to truncate above $2$ and not $1$ in the second statement above is that divisibility by $p$ in a $\Hom$-group imposes torsion conditions not visible when requiring individual classes to be divisible by $p$.  For instance, the second conclusion above implies that the $p$-torsion in $\R^i f_*\calO_X$ for $i \geq 2$ can be killed by alterations. We do not know how to prove an $H^1$-analogue of this statement: when $S$ is affine, this analogue amounts to verifying that functions $H^0(X,\calO_X/p)$ on the special fibre of $X$ lift to the functions $H^0(X,\calO_X)$ on all of $X$  provided we allow passage to alterations.
\end{remark}

We record a global corollary of Theorem \ref{mixedcharpdiv} that was already proven above.

\begin{corollary}
\label{mixedcharpdivglobal}
Let $f:X \to S$ be a proper morphism with $S$ excellent. Then there exists an alteration $\pi:Y \to X$ such that, with $g = f \circ \pi$, we have $\pi^*(\R^i f_*\calO_X) \subset p(\R^i g_*\calO_Y)$ for each $i > 0$.
\end{corollary}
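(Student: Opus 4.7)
The plan is to deduce this global sheaf-level statement from the affine, module-level version in Theorem~\ref{mixedcharpdiv} by a Zariski-gluing argument on the base. Since $S$ is excellent it is noetherian, so it admits a finite affine open cover $\{U_j\}_{j=1}^n$. On each piece, the base-changed morphism $f_j : X_{U_j} \to U_j$ is a proper morphism of noetherian schemes with $U_j$ affine, and Theorem~\ref{mixedcharpdiv} furnishes an alteration $\pi_j : Y_j \to X_{U_j}$ with $\pi_j^*\bigl(H^i(X_{U_j}, \calO)\bigr) \subset p\,H^i(Y_j, \calO)$ for every $i > 0$.

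Since $f_j$ is proper and $U_j$ is affine, the coherent sheaf $(\R^i f_* \calO_X)|_{U_j}$ is the quasi-coherent sheaf associated to $H^i(X_{U_j}, \calO)$, and similarly for $Y_j$. Consequently the module-level divisibility on each $U_j$ transcribes directly into the sheaf-level containment
\[
\pi_j^*\bigl((\R^i f_*\calO_X)|_{U_j}\bigr) \subset p\,\R^i g_{j*}\calO_{Y_j},
\]
where $g_j = f_j \circ \pi_j$.

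To assemble the $\pi_j$ into a single alteration $\pi : Y \to X$, I would invoke the spreading-out argument already used in the proof of Lemma~\ref{reducetolocal} (namely \cite[Proposition 4.1]{ddscposchar}): this produces an alteration $\pi : Y \to X$ such that $\pi \times_S U_j$ factors through $\pi_j$ for each $j$. Setting $g = f \circ \pi$, the desired containment $\pi^*(\R^i f_*\calO_X) \subset p\,\R^i g_*\calO_Y$ now holds after restriction to each member of the cover, and since an inclusion of two coherent subsheaves inside a fixed coherent sheaf is a Zariski-local condition, the global statement follows. The main obstacle is purely organizational---producing one $\pi$ that dominates every $\pi_j$ simultaneously---and is fully handled by the cited spreading-out lemma, so no new geometric input beyond Theorem~\ref{mixedcharpdiv} is required.
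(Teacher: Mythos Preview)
Your proposal is correct and follows essentially the same route as the paper: the paper's proof simply says the corollary follows by combining Theorem~\ref{mixedcharpdiv} with Lemma~\ref{reducetolocal}, and what you have written is precisely an explicit unpacking of that combination (you reproduce the converse direction of the proof of Lemma~\ref{reducetolocal}, including the same citation of \cite[Proposition~4.1]{ddscposchar} for the spreading-out step).
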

\begin{proof}
One can trace through our constructions to see we have already proven this. Alternately, this follows by combining Theorem \ref{mixedcharpdiv} and Lemma \ref{reducetolocal}.
\end{proof}

Finally, we give an example showing that Theorem \ref{mixedcharpdiv} fails as soon as the properness of $f$ is relaxed.

\begin{example}
\label{ex:properneeded}
Let $k$ be a characteristic $p$ field, and let $X = \P^n_k - \{x\}$ for some $x \in \P^n(k)$ and $n \geq 2$.  Then $H^{n-1}(X,\calO_X) \simeq H^n_{x}(\P^n_k,\calO_{\P^n_k})$ is non-zero. Moreover, for any proper surjective morphism $\pi:Y \to X$, the pullback $\calO_X \to \R \pi_* \calO_Y$ is a direct summand (by \cite{ddscposchar}), so $H^{n-1}(X,\calO_X) \to H^{n-1}(Y,\calO_Y)$ is also a direct summand. In particular, non-zero classes in $H^{n-1}(X,\calO_X)$ cannot be killed by proper covers. Replacing $X$ with the obvious mixed characteristic variant $X'$ gives an example of a $\Z_p$-flat scheme $X'$ with non-zero higher coherent cohomology that cannot be made divisible by $p$ on passage to proper covers.
\end{example}

\section{A stronger result in positive characteristic}
\label{sec:mixedimpliespos}

Our goal in this section is to explain an alternative proof of \cite[Theorem 1.5]{ddscposchar} using Theorem \ref{mixedcharpdiv}. Recall that the former asserts:

\begin{theorem}
\label{thm:poscharkillfinite}	
Let $f:X \to S$ be a proper morphism of noetherian $\F_p$-schemes. Then there exists a finite surjective map $\pi:Y \to X$ such that, with $g = f \circ \pi$, the pullback $\pi^*: \tau_{\geq 1} \R f_* \calO_X \to \tau_{\geq 1} \R g_* \calO_Y$ is $0$.
\end{theorem}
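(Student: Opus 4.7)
The plan is to derive Theorem \ref{thm:poscharkillfinite} from Theorem \ref{mixedcharpdiv} by exploiting two characteristic-$p$ specialisations: that ``divisibility by $p$'' means ``zero'', and that the absolute Frobenius is a finite surjective morphism. By a spreading-out argument in the spirit of Lemma \ref{lem:reducetoexc}, I first reduce to the case where $X$ and $S$ are excellent and integral.

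Next I apply Theorem \ref{mixedcharpdiv} in the strengthened form given by the remark ``The proof of Theorem \ref{mixedcharpdiv} actually shows\ldots'' appearing just after its proof. Specialising to $p = 0$ on $S$, I obtain an alteration $\pi: Y \to X$ such that (a) the sheaf map $\pi^*\R^1 f_*\calO_X \to \R^1 g_*\calO_Y$ is zero, and (b) the derived map $\tau_{\geq 2}\R f_*\calO_X \to \tau_{\geq 2}\R g_*\calO_Y$ is zero in $\D(\Coh(S))$. A standard truncation argument using the triangle
\[ \R^1 g_*\calO_Y[-1] \to \tau_{\geq 1}\R g_*\calO_Y \to \tau_{\geq 2}\R g_*\calO_Y \xrightarrow{+} \]
then combines (a) and (b) into the derived vanishing of $\pi^*: \tau_{\geq 1}\R f_*\calO_X \to \tau_{\geq 1}\R g_*\calO_Y$: composition with the projection onto $\tau_{\geq 2}\R g_*\calO_Y$ vanishes by (b), so the map factors through $\R^1 g_*\calO_Y[-1]$, and the corresponding sheaf map $\R^1 f_*\calO_X \to \R^1 g_*\calO_Y$ is zero by (a).

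To upgrade from an alteration to a finite surjection, I factor $\pi$ as $Y \xrightarrow{h} Y' \xrightarrow{\nu} X$, with $\nu$ the normalisation of $X$ in the function field of $Y$ (finite surjective) and $h$ proper birational. A priori the vanishing of $\pi^* = h^* \circ \nu^*$ does not force vanishing of $\nu^*$, since $h^*$ may have kernel on higher derived pushforwards. The bridge specific to characteristic $p$ is the absolute Frobenius $F_{Y'}: Y' \to Y'$, which is finite and surjective (after the usual $F$-finite reductions). I expect to show that the extension of $\calO_{Y'}$ by $\tau_{\geq 1}\R h_*\calO_Y$ is annihilated by a sufficiently high Frobenius power on $Y'$, so that the composite finite surjection $\nu \circ F^n_{Y'}: Y' \to X$ already realises the desired derived vanishing of $\tau_{\geq 1}\R f_*\calO_X$.

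The main obstacle is this last Frobenius-nilpotence step, which is exactly the characteristic-$p$ content that distinguishes Theorem \ref{thm:poscharkillfinite} from a naive specialisation of Theorem \ref{mixedcharpdiv}: without it, one only obtains an alteration, and promoting this to a finite surjection requires genuinely new information about the interaction between absolute Frobenius and the higher direct images along a proper birational morphism in positive characteristic.
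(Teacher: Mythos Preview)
Your reduction to the excellent integral case and your derivation of the derived vanishing $\pi^*:\tau_{\geq 1}\R f_*\calO_X \to \tau_{\geq 1}\R g_*\calO_Y$ for an \emph{alteration} $\pi$ are both fine. The gap you yourself flag at the end, however, is real and is not bridged by the argument you sketch. There is no general reason why a high power of absolute Frobenius on $Y'$ should annihilate the extension of $\calO_{Y'}$ by $\tau_{\geq 1}\R h_*\calO_Y$, and even if it did, it is not clear this would force the composite $\nu\circ F_{Y'}^n$ to realise the derived vanishing you want on $\tau_{\geq 1}\R f_*\calO_X$. In effect, the Frobenius-nilpotence step you are asking for is of the same order of difficulty as the theorem itself (for the map $h:Y\to Y'$), so the argument becomes circular.

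The paper's proof avoids Frobenius entirely, and this is the point worth absorbing: it shows that ``annihilation by proper covers implies annihilation by finite covers'' by a purely homological trick. Namely, having produced a proper surjection $\pi':Y'\to X$ with $\tau_{\geq 1}\R f_*\calO_X \to \tau_{\geq 1}\R g'_*\calO_{Y'}$ equal to $0$, one applies the \emph{same} result a second time, now to the map $\pi':Y'\to X$, to produce $\pi^\flat:Y''\to Y'$ with $\tau_{\geq 1}\R\pi'_*\calO_{Y'}\to\tau_{\geq 1}\R\pi''_*\calO_{Y''}$ equal to $0$. This second vanishing gives a splitting $s:\R\pi'_*\calO_{Y'}\to\pi''_*\calO_{Y''}$ in $\D(\Coh(X))$, and hence a factorisation of $\R f_*\calO_X\to\R f_*(\pi''_*\calO_{Y''})$ through $\R g'_*\calO_{Y'}$. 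The first vanishing then forces this composite to be $0$ on $\tau_{\geq 1}$. Since $\pi''_*\calO_{Y''}$ is a coherent $\calO_X$-algebra, the associated finite surjective map $\pi:Y\to X$ (the Stein factorisation of $\pi''$) already does the job. So the missing idea in your proposal is not a Frobenius argument but rather this second application of the alteration result combined with Stein factorisation.
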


Applying Theorem \ref{mixedcharpdiv} in positive characteristic, {\em a priori}, only allows us to kill cohomology on passage to proper covers. The point of the proof below, therefore, is that annihilation by proper covers implies annihilation by finite covers for coherent cohomology; see  \cite[\S 6]{Bhattanngrpsch} for an example with \'etale cohomology with coefficients in an abelian variety where such an implication fails.

\begin{proof}[Proof of Theorem \ref{thm:poscharkillfinite}]
We first explain the idea informally. Using Corollary \ref{mixedcharpdivglobal}, one finds proper surjective maps $Y' \to X$ and $Y'' \to Y'$ annihilating the higher coherent cohomology of $X \to S$ and $Y' \to X$ respectively; then one simply checks that the Stein factorisation of $Y'' \to X$ does the job.

In more detail, by repeatedly applying Corollary \ref{mixedcharpdivglobal} and using elementary facts about derived categories (see \cite[Lemma 3.2]{ddscposchar}), we may find a proper surjective map $\pi':Y' \to X$ such that, with $g' = f \circ \pi'$, the pullback $\tau_{\geq 1} \R f_*\calO_X \to \tau_{\geq 1} \R g'_*\calO_Y'$ is $0$.  Applying the same reasoning now to the map $\pi':Y' \to X$, we find a map $\pi^\flat:Y'' \to Y'$ such that, with $\pi'' = \pi^\flat \circ \pi'$, we have that $\tau_{\geq 1} \R \pi'_* \calO_Y \to \tau_{\geq 1} \R \pi''_*\calO_{Y''}$ is $0$. The picture obtained thus far is:
\[ \xymatrix{ Y'' \ar[r]^{\pi^\flat} \ar[rd]^{\pi''} & Y' \ar[d]^{\pi'} \ar[rd]^{g'} & \\
			& X \ar[r]^f & S. } \]
The diagram restricted to $X$ gives rise to the following commutative diagram of exact triangles in $\D(\Coh(X))$:
\[ \xymatrix{ \calO_X \ar@{=}[r] \ar[d] & \calO_X \ar[d] \ar[r] & 0 \ar[r] \ar[d] & \calO_X[1] \ar[d] \\
\pi'_*\calO_{Y'} \ar[r] \ar[d]^a & \R \pi'_*\calO_{Y'} \ar[r] \ar[d]^b \ar@{.>}[ld]_s & \tau_{\geq 1} \R \pi'_* \calO_{Y'} \ar[r] \ar[d]^{c = 0} & \pi'_*\calO_{Y'}[1] \ar[d]^{a[1]} \\
\pi''_*\calO_{Y''} \ar[r]  & \R \pi''_*\calO_{Y''} \ar[r]  & \tau_{\geq 1} \R \pi''_* \calO_{Y''} \ar[r] & \pi''_*\calO_{Y''}[1]. } \]
Here the vertical arrows are the natural pullback maps, and the dotted arrow $s$ is a chosen lifting of $b$ guaranteed by the condition $c = 0$ (which is true by construction). Applying $\R f_*$ to the above diagram, we find a factorisation:
\[ \xymatrix{ \R f_*\calO_X \ar[rr]^h \ar[rd]^d & & \R f_* (\pi''_*\calO_{Y''}) \\
					& \R (f \circ \pi')_*\calO_{Y'}  \simeq \R g'_*\calO_{Y'} \ar[ru]^e. & } \]
The map $d$ induces the $0$ map on $\tau_{\geq 1}$ by construction. It follows that the same is true for the map $h$. On the other hand, the sheaf $\pi''_* \calO_{Y''}$ is a coherent sheaf of algebras on $X$. Hence, it corresponds to a finite morphism $\pi:Y \to X$. In fact, $\pi$ is simply the Stein factorisation of $\pi''$. In particular, $\pi$ is surjective. It then follows that $\pi:Y \to X$ is a finite surjective morphism such that, with $g = f \circ \pi$, the induced map $\tau_{\geq 1} \R f_*\calO_X \to \tau_{\geq 1} \R g_*\calO_Y$ is $0$, as desired.
\end{proof}

\begin{remark}
There is an alternative and more conceptual explanation of the preceding reduction from proper covers to finite covers in the case of $H^1$. Namely, let $\alpha \in H^1(X,\calO_X)$ be a cohomology class, and let $f:Y \to X$ be a proper surjective map such that $f^*\alpha = 0$. We may represent $\alpha$ as a $\G_a$-torsor $T \to X$. The assumption on $Y$ then says that there is an $X$-map $Y \to T$. By the defining property of the Stein factorisation $Y \to Y'' \to X$, the map $Y \to T$ factors as a map $Y'' \to T$, i.e., the pullback of $T$ (or, equivalently $\alpha$) along the finite surjective map $Y'' \to X$ is the trivial torsor, as wanted.  The key cohomological idea underlying this argument is that the pullback $H^1(Y'',\calO_{Y''}) \to H^1(Y,\calO_Y)$ is injective. This injectivity fails for higher cohomological degree, so the proof above is slightly more complicated.
\end{remark}

\begin{remark}
\label{rmk:findfinitecov}
Assume for a moment that the conclusion of Theorem \ref{mixedcharpdiv} can be lifted to the derived category as discussed in Remark \ref{rmk:ptorsionkill}, i.e., we can kill $p$-torsion in higher coherent cohomology by passage to alterations. Then the argument given in the proof of Theorem \ref{thm:poscharkillfinite} applies directly to show that, in fact, one can make cohomology $p$-divisible (in the derived sense) by passage to finite covers. In particular, we can then replace ``alteration'' with ``finite surjective map'' in the statement of Theorem \ref{mixedcharpdiv}. We have checked this consequence in a few non-trivial examples (like the blowup of an elliptic $2$-dimensional singularity over $\Z_p$), and we hope that it is a reasonable expectation in general.
\end{remark}

\bibliography{mixedchar}

\begin{thebibliography}{BLR90}

\bibitem[AO01]{AbramovichOort}
Dan Abramovich and Frans Oort.
\newblock Stable maps and {H}urwitz schemes in mixed characteristics.
\newblock In {\em Advances in algebraic geometry motivated by physics
  ({L}owell, {MA}, 2000)}, volume 276 of {\em Contemp. Math.}, pages 89--100.
  Amer. Math. Soc., Providence, RI, 2001.

\bibitem[Bei]{Beilinsonpadic}
A.~Beilinson.
\newblock p-adic periods and the derived de rham cohomology.
\newblock Available at \url{http://www.arxiv.com/abs/1102.1294}, and to appear
  in {\em J. Amer. Math. Soc.}

\bibitem[Bei11]{Beilinsoncrystalline}
A.~Beilinson.
\newblock On the crystalline period map.
\newblock Available at \url{http://arxiv.org/abs/1111.3316}, 2011.

\bibitem[Bhaa]{Bhattanngrpsch}
Bhargav Bhatt.
\newblock Annihilating the cohomology of group schemes.
\newblock Available at \url{http://arxiv.org/abs/1109.2383}, and to appear in
  {\em Algebra and Number Theory}.

\bibitem[Bhab]{ddscposchar}
Bhargav Bhatt.
\newblock Derived splinters in positive characteristic.
\newblock Available at \url{http://arxiv.org/abs/1109.0354}, and to appear in
  {\em Compositio}.

\bibitem[Bhac]{Bhattpadicddr}
Bhargav Bhatt.
\newblock p-adic derived de {R}ham cohomology.
\newblock Forthcoming.

\bibitem[BLR90]{NeronBook}
Siegfried Bosch, Werner L{\"u}tkebohmert, and Michel Raynaud.
\newblock {\em N\'eron models}, volume~21 of {\em Ergebnisse der Mathematik und
  ihrer Grenzgebiete (3) [Results in Mathematics and Related Areas (3)]}.
\newblock Springer-Verlag, Berlin, 1990.

\bibitem[Con07]{ConradNagata}
Brian Conrad.
\newblock Deligne's notes on {N}agata compactifications.
\newblock {\em J. Ramanujan Math. Soc.}, 22(3):205--257, 2007.

\bibitem[dJ96]{dJAlt}
A.~J. de~Jong.
\newblock Smoothness, semi-stability and alterations.
\newblock {\em Inst. Hautes \'Etudes Sci. Publ. Math.}, (83):51--93, 1996.

\bibitem[dJ97]{dJAlt2}
A.~J. de~Jong.
\newblock Families of curves and alterations.
\newblock {\em Ann. Inst. Fourier (Grenoble)}, 47(2):599--621, 1997.

\bibitem[FC90]{FaltingsChai}
Gerd Faltings and Ching-Li Chai.
\newblock {\em Degeneration of abelian varieties}, volume~22 of {\em Ergebnisse
  der Mathematik und ihrer Grenzgebiete (3) [Results in Mathematics and Related
  Areas (3)]}.
\newblock Springer-Verlag, Berlin, 1990.
\newblock With an appendix by David Mumford.

\bibitem[Gro66]{EGA4_3}
A.~Grothendieck.
\newblock \'{E}l\'ements de g\'eom\'etrie alg\'ebrique. {IV}. \'{E}tude locale
  des sch\'emas et des morphismes de sch\'emas. {III}.
\newblock {\em Inst. Hautes \'Etudes Sci. Publ. Math.}, (28):255, 1966.

\bibitem[Hoc07]{HochsterSurvey}
M.~Hochster.
\newblock Homological conjectures, old and new.
\newblock {\em Illinois J. Math.}, 51(1):151--169 (electronic), 2007.

\bibitem[Pop85]{PopescuNeronDesing}
Dorin Popescu.
\newblock General {N}\'eron desingularization.
\newblock {\em Nagoya Math. J.}, 100:97--126, 1985.

\end{thebibliography}

\end{document}